\documentclass[article]{amsart}
\usepackage[english]{babel}
  \usepackage[
    backend=biber,
    style=alphabetic,
    doi=false,
    isbn=false,
    url=false,
    eprint=false
  ]{biblatex}
\usepackage{amsfonts}
\usepackage{microtype}
\usepackage{listings}
\usepackage{xcolor}
\usepackage[T1]{fontenc}
\usepackage[all,arc]{xy}
\usepackage{enumerate}
\usepackage{fancyhdr}
\usepackage[utf8]{inputenc}
\usepackage{amsfonts,empheq}
\usepackage{extpfeil}
\usepackage{fullpage}
\usepackage{amssymb}
\usepackage{comment}
\usepackage{hyperref} 
\usepackage{setspace}
\hypersetup{%
  colorlinks=true,%
  linkcolor=black,%
  citecolor=black,%
  filecolor=black,%
  menucolor=black,
  urlcolor=black,%
  pdfnewwindow=true,%
  pdfstartview=FitBH}   
\usepackage{listings}
\usepackage{url}
\usepackage{fancyhdr}
\usepackage{graphicx}
\usepackage{pdflscape}
\usepackage{geometry}
\usepackage{amsmath}
\usepackage{enumitem}
\usepackage{mathtools}
\usepackage{color}
\usepackage{mathrsfs}
\usepackage{tikz-cd}
\usepackage{graphicx}
\usepackage{etoolbox}
\usepackage{amsthm}
\usepackage{lmodern}

\lstdefinelanguage{Macaulay2}{
  sensitive=true,
  morekeywords={and,break,catch,continue,do,else,for,from,global,if,in,list,
    local,new,not,of,or,return,step,symbol,then,throw,time,to,try,when,while,xor},
  morekeywords=[2]{ZZ,QQ,RR,CC,Ring,Ideal,Module,Matrix,List,Sequence,HashTable},
  morekeywords=[3]{ideal,matrix,gens,gb,res,ker,image,coker,dim,degree,betti,
    decompose,primaryDecomposition,hilbertSeries,rank,transpose,map,sub,apply,
    select,print,needsPackage},
  morecomment=[l]{--},          
  morecomment=[s]{-*}{*-},      
  morestring=[b]",
  moredelim=[s][\color{teal}]{///}{///},   
}

\allowdisplaybreaks

\usepackage{titlesec}
\makeatletter
\renewcommand\section{\@startsection{section}{3}{\z@}%
                                     {-3.25ex\@plus -1ex \@minus -.2ex}%
                                     {1.5ex \@plus .2ex}
                                     {\normalfont\LARGE\bfseries}}
\renewcommand\subsection{\@startsection{subsection}{3}{\z@}%
                                     {-3.25ex\@plus -1ex \@minus -.2ex}%
                                     {-1.5ex \@plus -.2ex}
                                     {\normalfont\normalsize\bfseries}}
\patchcmd{\@maketitle}
  {\ifx\@empty\@dedicatory}
  {\ifx\@empty\@date \else {\vskip3ex \centering\footnotesize\@date\par\vskip1ex}\fi
   \ifx\@empty\@dedicatory}
  {}{}
\patchcmd{\@adminfootnotes}
  {\ifx\@empty\@date\else \@footnotetext{\@setdate}\fi}
  {}{}{}
\makeatother

\newtheorem{theorem}{Theorem}[section]
\newtheorem{corollary}[theorem]{Corollary}
\newtheorem{lemma}[theorem]{Lemma}

\newtheorem{proposition}[theorem]{Proposition}

\newtheorem{remark}[theorem]{Remark}

\theoremstyle{definition}

\newtheorem{definition}{Definition}[section]

\numberwithin{equation}{section}

\newcommand{\sub}{\subseteq}

\newcommand{\ZZ}{\mathbb{Z}}

\newcommand{\QQ}{\mathbb{Q}}

\newcommand{\FF}{\mathbb{F}}
\newcommand{\ff}{\mathcal{F}}

\newcommand{\bb}{\mathcal{B}}

\newcommand{\ww}{\mathcal{W}}
\newcommand{\cc}{\mathcal{C}}

\newcommand{\aaa}{\mathcal{A}}

\newcommand{\Gg}{\mathfrak{g}}
\newcommand{\gl}{\mathfrak{gl}}

\newcommand{\Hh}{\mathfrak{h}}
\newcommand{\ol}{\overline}

\DeclareMathOperator{\ad}{ad}

\DeclareMathOperator{\Lie}{Lie}

\DeclareMathOperator{\Tr}{Tr}

\DeclareMathOperator{\Spec}{Spec}

\title{The commuting scheme of $\mathfrak{g}_2$}
\author{Pallav Goyal}

\address{P.~Goyal: Centre for Quantum Mathematics, Department of Mathematics and Computer Science, University of Southern Denmark}
\email{pallavg@imada.sdu.dk}
\date{}

\begin{document}

\maketitle

\begin{abstract}
We study the commuting scheme of the exceptional Lie algebra $\Gg_2$ via computational methods. Over a field of characteristic zero, we show that the commuting scheme of $\Gg_2$ is reduced, Cohen-Macaulay and normal. Our computations give an explicit homogeneous system of parameters, determine the Hilbert series and degree, identify the first syzygies of the commuting ideal and show that it is not Gorenstein. We extend some of the above results to positive characteristic.
\end{abstract}

\section{Introduction}

An extensively studied object in representation theory is the commuting scheme $\cc(\Gg)$ of a Lie algebra $\Gg$, which is the closed subscheme of $\Gg\times \Gg$ whose defining equations are given by the vanishing of the Lie bracket. That is, roughly speaking, the scheme $\cc(\Gg)$ consists of pairs of elements of $\Gg$ that commute with each other. This scheme has close connections to mathematical objects from various disciplines - the Hilbert scheme of points on a plane, character varieties, Higgs bundles, Langlands duality, etc. - which also makes it a very interesting object of study in algebraic geometry, topology and mathematical physics (see, for example, \cite{Na2, FL14, NC20, LNY}).

One of the most fundamental and longstanding conjectures in the field is that the commuting scheme is reduced, that is, the ideal defined by the vanishing of the Lie bracket is a radical ideal. Since the scheme is known to be generically reduced in characteristic zero and in good characteristic, proving that it is Cohen-Macaulay is enough to establish its reducedness. While a resolution of this conjecture in full generality resisted a lot of effort over the years, partial progress was made in several directions, some of which we recall here: It was shown that the underlying variety of $\cc(\Gg)$ is irreducible for $\Gg=\gl_n$ independently in \cite{MT52} and \cite{Ger} and for general $\Gg$ in \cite{Ri}. The Cohen-Macaulayness of $\cc(\Gg)$ was resolved for $\Gg=\gl_n$ when $n=3$ in \cite{Thom} and $n=4$ in \cite{Fre94} by computational methods. Note that the scheme $\cc(\Gg)$ has an adjoint action of the simply connected group $G$ such that $\Gg=\Lie(G)$. It was shown that the categorical quotient $\cc(\Gg)//G$ is reduced in several special cases in \cite{GG, Vac, Dom, L, BD23, NC24}, before being proven in general in \cite{LNY}. The approaches in \cite{GG} and \cite{L} are via the study of `almost commuting schemes' of type $A$ and $C$ respectively. (Also see the work of the author \cite{Goy24} for an independent proof in type $C$.) In \cite{Gin12}, it was shown that the normalization of the commuting scheme is Cohen-Macaulay by the study of an `isospectral commuting scheme'.

Finally, we note that resolutions to the reducedness conjecture of the commuting scheme in various degrees of generality have appeared in the preprints \cite{SYX}, \cite{Char} and \cite{SY}. In this paper, we do not rely on any of these results or proofs. Rather, we are interested in studying the commuting scheme $\cc_K(\Gg_2)$ (which we will henceforth denote by $\cc_K$) of the smallest exceptional Lie algebra $\Gg_2$ over a field $K$ via computational methods, inspired by the work in \cite{Fre94}. Note that since the commuting ideal is defined by homogeneous quadratic polynomials, the coordinate ring of $\cc_K$ is naturally graded, and we fix this grading throughout the paper. The main result of this paper is a computational proof of the following:

\begin{theorem} \label{thm:statement}
The commuting scheme $\cc_K$ is Cohen-Macaulay and normal (thus, reduced) when $K$ is an arbitrary field of characteristic $0$. Furthermore, its Hilbert series is given by $H(t)(1-t)^{-16}$ where:
\[H(t):=1 + 12t + 64t^2 + 198t^3 + 388t^4 + 492t^5 + 395t^6 + 198t^7 + 64t^8 + 12t^9 + t^{10}.\]
In particular, the degree of $\cc_K$ is $H(1)=1825$.
\end{theorem}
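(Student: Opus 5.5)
We will work with the polynomial ring $S=K[x_1,\dots,x_{14},y_1,\dots,y_{14}]$, the coordinate ring of $\Gg_2\times\Gg_2$ in a Chevalley basis. Let $I$ be the commuting ideal, generated by the $14$ quadratic coordinates of $[x,y]$, and set $R=S/I$. Since $\dim\cc_K=\dim\Gg_2+\operatorname{rank}\Gg_2=16$ by Richardson's irreducibility theorem, $R$ has Krull dimension $16$. Because $\Gg_2$ has a Chevalley basis with integer structure constants, $I$ is defined over $\ZZ$. Gröbner basis computations over $\QQ$ therefore determine the answer for every field of characteristic $0$: Gröbner bases, Hilbert series, regular sequences and ranks of Jacobian minors are all stable under flat field extension, so $K=\QQ$ suffices. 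The plan has three steps: Cohen--Macaulayness via an explicit homogeneous system of parameters, normality via Serre's criterion, and reading off the Hilbert series.

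For Cohen--Macaulayness, we will choose $16$ linear forms $\ell_1,\dots,\ell_{16}$ in $S$. The natural candidates are generic or sparse combinations, for instance most coordinates of $y$ together with suitable combinations of $x$ adapted to a Cartan subalgebra. We will then check by a Gröbner basis computation that $A=S/(I+(\ell_1,\dots,\ell_{16}))$ is Artinian, i.e.\ has finite $K$-dimension. Next we compute $\dim_K A$ and compare it with the multiplicity $e(R)$, obtained from the Hilbert series of $R$, which is in turn read off from the leading term ideal of a Gröbner basis of $I$. Since the $\ell_i$ form a homogeneous system of parameters, $R$ is Cohen--Macaulay if and only if $\dim_K A=e(R)$. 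Equivalently, and better for bookkeeping, $R$ is Cohen--Macaulay if and only if the Hilbert series of $A$ equals $(1-t)^{16}$ times that of $R$; this polynomial is exactly $H(t)$. We therefore expect to find $H_A(t)=H(t)$ with $H(1)=1825=\dim_K A$.

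Normality then follows from Serre's criterion. Cohen--Macaulayness gives $S_2$, so it remains to check $R_1$: the singular locus must have codimension at least $2$ in $\cc_K$. We will compute the ideal $J$ of $14\times 14$ minors of the Jacobian of the generators of $I$; the codimension of $\Spec S$ is $28-16=12$, so we take minors of size $12$. We then show $\dim S/(I+J)\le 14$. Computing all these minors directly is infeasible, so we will work with a subideal $J'\subseteq J$ generated by a few well-chosen minors, or use the $G$-action. The singular locus is $G$-stable and conical, so it suffices to check smoothness at points of the form $(x,y)$ where $x$ lies in a slice through each nilpotent orbit type or through a regular semisimple locus. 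Reducedness follows, since Cohen--Macaulay together with generically reduced (from $R_0$) gives reduced, and normal implies reduced anyway. The degree statement is immediate from $H(1)=1825$, and the palindromic shape of $H$ serves as a sanity check.

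The main obstacle is computational. The Gröbner basis of $I$ in $28$ variables, and especially the codimension estimate for the Jacobian ideal, may be too expensive. For the Jacobian step we will first try a random choice of a handful of $12\times12$ minors, possibly after specializing part of the variables along a regular sequence, since a linear section preserves codimension bounds. If that fails, we will stratify by the Jordan type of the first factor and verify codimension at least $2$ stratum by stratum.
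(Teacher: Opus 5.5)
Your plan is sound in outline, but it departs from the paper at both load-bearing steps.

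\textbf{Cohen--Macaulayness.} You use the multiplicity criterion $\dim_K R/(\ell)=e(R)$. To apply it you need the Hilbert series of $S/I$ itself, i.e.\ a Gr\"obner basis of the full commuting ideal in $28$ variables. The paper never computes this. Instead:
\begin{itemize}
\item For the upper bound, it computes only the Artinian quotient by the explicit parameters $s_i=x_i-y_{\sigma(i)}$ for $i\le 14$, $s_{15}=x_1$, $s_{16}=x_5$, working modulo $p=32003$. It then lifts a spanning set to $\QQ$, giving $\operatorname{rank}_T\aaa\le 1825$.
\item For the matching lower bound $e(R)\ge 1825$, it uses points rather than a Hilbert series. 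It finds $1825$ distinct points of a random affine slice over a finite extension of $\FF_p$ at which $\Delta$ and the Jacobian of $Q_3,\dots,Q_{14}$ in the $u_j$ are nonzero. These are Hensel-lifted to $\mathcal{W}(\FF_{p^t})[[v_1,\dots,v_{16}]]$, and Corollary~\ref{cor:genci} then forces $Q_1,Q_2$ to vanish as well, so the generic fibre has at least $1825$ points.
\end{itemize}
Your route is conceptually simpler and needs no transfer from characteristic $p$ to characteristic $0$. Its bottleneck, however, is exactly the expensive computation the paper sidesteps. The paper's route reduces everything to $12$-variable problems, and as a by-product gives a direct proof of reducedness (the generic fibre is a product of fields) and a criterion that can be checked prime by prime.

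\textbf{Normality.} Your computational check of $R_1$ via Jacobian minors or a stratification is left quite vague: how many minors, and which strata. The paper simply cites \cite{Pop08} for the fact that $\cc_K$ is regular in codimension one. That is precisely what your stratification would have to reprove.

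\textbf{Two concrete errors.}
\begin{itemize}
\item Taken literally, your suggested parameters (most $y$-coordinates plus a few combinations of $x$) can never work. The scheme $\cc_K$ contains both $\Gg_2\times 0$ and $0\times\Gg_2$. For $R/(\ell)$ to be Artinian, the $x$-parts of the $16$ forms must therefore span all of $\Gg_2^*$, and likewise the $y$-parts. Hence at most two of the forms can involve only $y$. This is why the paper pairs coordinates as $x_i-y_{\sigma(i)}$, with $\sigma$ chosen so that each $[b_i,b_{\sigma(i)}]\neq 0$.
\item $H(t)$ is not palindromic: its coefficients in degrees $4$ and $6$ are $388$ and $395$. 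Your proposed sanity check would therefore reject the correct answer. In fact, since $\cc_K$ is Cohen--Macaulay, this asymmetry shows that it is not Gorenstein.
\end{itemize}
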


As a corollary of the above theorem, we get the doubled  Chevalley restriction isomorphism of \cite{LNY} in the special case $\Gg=\Gg_2$: The above theorem implies that $\cc_K//G_2$ is reduced. This together with density of $G\cdot (\Hh\times \Hh)$ in $\cc_K$ (by \cite{Ri}), makes the restriction map $K[\cc_K]^{G_2}\to K[\Hh\times \Hh]^W$ injective, where $W=D_6$ (the dihedral group with $12$ elements) is the Weyl group of $G_2$. Surjectivity of this map holds for every reductive Lie algebra by Joseph (\cite{Josn}), and thus we get the isomorphism $\cc_K//G_2 \simeq (\Hh\times \Hh)//W$.

The Hilbert series computation above also allows us to infer the following: The coefficients of the polynomial $H(t)$ form the vector $(1,12,64,198,388,492,395,198,64,12,1)$ which isn't symmetric since $388\neq 395$ (although it is quite close to being symmetric!) Since $\cc_K$ is Cohen-Macaulay, this asymmetry implies that it is not Gorenstein by \cite{Stan}. Next, the Hilbert series of $\cc_K$ relative to that of $\Gg_2\times \Gg_2$ is $H(t)(1-t)^{12}=1-14t^2+2t^3+\cdots$. The term $-14t^2$ says that the commuting ideal defining $\cc_K$ is generated by $14$ quadratic polynomials (which are exactly the $14$ coefficients of the commutator $[x,y]$). Furthermore, the term $2t^3$ says that there are exactly $2$ linearly independent syzygies in degree $3$ between these $14$ elements. We find these syzygies explicitly in terms of the Killing form in Lemma~\ref{lem:killing}. 

Theorem~\ref{thm:statement} was proven by using Macaulay2 as a tool for Gr\"obner basis computations and Python for subsequent verifications. 
Thus, our arguments naturally lend themselves to a discussion of the positive characteristic case in Section~\ref{sec:p>0}. Theorem~\ref{thm:statement} almost immediately implies that $\cc_K$ is Cohen-Macaulay over fields $K$ of positive characteristic $p\gg0$, but we believe that this should in fact be true when $p\geq 5$. While we do not give a general argument that resolves the positive characteristic case completely, we describe an explicit computational criterion that allows us to check Cohen-Macaulayness at any prime (see Lemma~\ref{lem:p-crit}). We have used our criterion to verify Cohen-Macaulayness and reducedness when $5\leq p\leq 1000$, which (very slightly) improves upon the results of \cite{SY}, where the $p=5$ case is excluded. 


\subsection{Structure of the paper} We discuss some background about the Lie algebra $\Gg_2$ and the commuting scheme in Section~\ref{sec:back}, with explicit equations and structure constants described in Appendix 1 (Section~\ref{sec:app1}). In Section~\ref{sec:mainmath}, we prove our main result (Corollary~\ref{cor:maincor}) by identifying an explicit system of parameters for the commuting scheme and analyzing the number of points in one of its affine slices. The codes used for our computations are given in Appendix 2 (Section~\ref{sec:app2}).

\subsection{Acknowledgements} The author would like to thank Joshua Mundinger for suggesting the study of the scheme $\cc_K$ by computational methods.

\subsection{AI use disclosure}
The author made use of ChatGPT for help writing and debugging the Macaulay2 code in Section~\ref{sec:mac2}. ChatGPT and Claude were also used to proofread the document. All proofs and text are human-generated and all computational verifications were performed and checked by the author.

\section{Background and notation} \label{sec:back}

Throughout the paper, the field $K$ will have characteristic zero or prime characteristic $p\geq 5$. This is precisely the good characteristic assumption for type $G_2$.

We recall some facts about the root system of type $G_2$ and refer the reader to \cite{Hum} for more details. The Cartan matrix of type $G_2$ is:
\[
 C=(c_{i,j})=\begin{pmatrix}2&-1\\-3&2\end{pmatrix}
\]
and the Lie algebra $\Gg_2$ has a Chevalley-Serre presentation which is given as follows: The Lie algebra $\Gg_2$ over a field $K$ of characteristic zero is generated by elements $e_i, f_i, h_i$ for $i\in \{1,2\}$ subject to the following relations for all $i,j\in \{1,2\}$:
\[[h_i,h_j]=0,\]
\[[h_i,e_j]=c_{i,j}e_j,\qquad [h_i,f_j]=-c_{i,j}f_j,\]
\[[e_i,f_j]=\delta_{i,j}h_{i},\]
\[\mathrm{ad}(e_i)^{1-c_{i,3-i}}e_{3-i}=0,\]
\[\mathrm{ad}(f_i)^{1-c_{i,3-i}}f_{3-i}=0.\]

Then, we have the root space decomposition:
\[\Gg_2=\Hh\oplus(\bigoplus_{\gamma\in \Phi}\Gg_{\gamma}),\]
where $\Hh:=\mathrm{span}\{h_1,h_2\}$ is a Cartan subalgebra, the set $\Phi\sub \Hh^*$ is the set of roots and $\Gg_{\gamma}$ is the root space corresponding to the root $\gamma$. We make a choice of simple roots $\{\alpha,\beta\}$ so that $e_{1}=e_{\alpha}$ and $e_2=e_{\beta}$, etc. Here, $\alpha$ is the short root and $\beta$ is the long root, so that:
\[(\alpha,\alpha)=2,\qquad (\beta,\beta)=6,\]
where $(\cdot,\cdot)$ denotes the invariant inner product on the root space, normalized so that the short roots have squared length $2$.

The set of roots $\Phi$ has size $12$, which implies that as a $K$-vector space, the Lie algebra $\Gg_2$ is $14$-dimensional. We define the positive root vectors:
\begin{align*}
 e_{\alpha+\beta}&=[e_\alpha,e_\beta],&
 e_{2\alpha+\beta}&=\tfrac12[e_\alpha,e_{\alpha+\beta}],\\
 e_{3\alpha+\beta}&=\tfrac13[e_\alpha,e_{2\alpha+\beta}],&
 e_{3\alpha+2\beta}&=[e_\beta,e_{3\alpha+\beta}].
\end{align*}
Similarly, we have the negative root vectors:
\begin{align*}
 f_{\alpha+\beta}&=[f_\alpha,f_\beta],&
 f_{2\alpha+\beta}&=\tfrac12[f_\alpha,f_{\alpha+\beta}],\\
 f_{3\alpha+\beta}&=\tfrac13[f_\alpha,f_{2\alpha+\beta}],&
 f_{3\alpha+2\beta}&=[f_\beta,f_{3\alpha+\beta}].
\end{align*}
Then, we can fix an ordered basis of $\Gg_2$ given by:
\[(b_i)_{1\leq i\leq 14}:=(h_{\alpha}, h_{\beta}, e_\alpha,e_\beta,e_{\alpha+\beta},e_{2\alpha+\beta},e_{3\alpha+\beta},e_{3\alpha+2\beta},f_\alpha,f_\beta,f_{\alpha+\beta},f_{2\alpha+\beta},
 f_{3\alpha+\beta},f_{3\alpha+2\beta}).\]

The resulting structure constants $c_{ij}^k$ are integral:
\[[b_i, b_j]=\sum_k c_{ij}^k b_k\]
and have been noted explicitly in Section~\ref{sec:app1}. For fields $K$ of characteristic $p>0$, we define $\Gg_2$ over $K$ to be precisely the space spanned by a basis $(b_i)_{1\leq i\leq 14}$ and the above structure constants.

\begin{definition}
The commuting scheme $\cc_K$ of $\Gg_2$ defined over a field $K$ is the closed subscheme of $\Gg_2\times \Gg_2$ defined by the vanishing of the Lie bracket on $\Gg_2$. More precisely, we define $\cc_K:=\Spec(\mathcal{A}_K)$, where
\[\aaa_K:=\frac{K[x_1,x_2,\dots, x_{14},y_1, y_2, \dots, y_{14}]}{\Big(\sum\limits_{i,j=1}^{14}c_{i,j}^kx_iy_j: 1\leq k\leq 14\Big)}.\]
\end{definition}
For ease of notation, we will write $\cc:=\cc_{\QQ}$ and $\aaa:=\aaa_{\QQ}$. We will also abuse notation and use the same formula above to define $\aaa_R$ when $R$ is an arbitrary commutative ring.

It is known that $\cc_K$ has dimension $\dim\Gg_2+\dim\Hh=16$ (by \cite{Ri} in characteristic zero and by \cite{Levy02} in good positive characteristic). It is also proven in \cite{Ri} that the scheme $\cc_K$ is irreducible when $K$ has characteristic zero. (Richardson actually worked over algebraically closed $K$, but the claim for a general $K$ follows from the observation that the map $\cc_{\ol{K}}\to \cc_K$ is surjective.) 

Note that $\cc_K$ is generically reduced over an algebraically closed field $K$ (under our characteristic assumption above) because of the following geometric argument: The commuting scheme $\cc_K$ is the zero fiber of the moment map
\[\mu:\Gg_2\times \Gg_2\to \Gg_2\]
\[(x,y)\mapsto [x,y].\]
The derivative of this map $d\mu$ at any point $(x,y)\in \cc_K$ and any vector $(a,b)$ is given by the vector:
\[[x,b]+[a,y].\]
When $x$ is regular semisimple, we can take $a=0$ to conclude that the derivative $d\mu$ has rank at least $\dim\Gg_2-\dim\Hh=12$ at the point $(x,y)$ showing that the tangent space to $\cc_K$ at the point has dimension at most $28-12=16$. However, as $\dim(\cc_K)=16$, this shows that $(x,y)$ is a smooth point of $\cc_K$. Therefore, since $\cc_K$ is irreducible, we conclude that it is generically smooth, and thus, generically reduced. (Irreducibility of $\cc_K$ when $\mathrm{char}(K)=p\geq 5$ was again proven in \cite{Levy02}.)

We end the section with some elementary observations that will be useful for computations later. Recall the non-degenerate Killing form $\kappa(x,y):=\Tr(\ad x \ad y)$ for $x,y\in \Gg_2$. Let $Q_k(x,y):=\sum\limits_{i,j=1}^{14} c_{i,j}^kx_iy_j \in \ZZ[x,y]$ be the quadratic polynomials that define the scheme $\cc$.

\begin{lemma} \label{lem:killing}
We have the following identities in $\ZZ[x,y]$:
\[\sum_{k,l=1}^{14}\kappa(b_k,b_l)x_k Q_l(x,y)=0,\]
\[\sum_{k,l=1}^{14}\kappa(b_k,b_l)y_k Q_l(x,y)=0.\]
\end{lemma}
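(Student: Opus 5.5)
The plan is to recognize both identities as polynomial shadows of the invariance of the Killing form, $\kappa([a,b],c)=\kappa(a,[b,c])$. Let $x=\sum_i x_i b_i$ and $y=\sum_j y_j b_j$ be the generic elements of $\Gg_2\otimes\ZZ[x,y]$, where $x_i,y_j$ are the coordinate functions. By definition of the structure constants,
\[[x,y]=\sum_{i,j}x_iy_j[b_i,b_j]=\sum_k Q_k(x,y)\,b_k .\]
Extend $\kappa$ bilinearly over $\ZZ[x,y]$. The left-hand side of the first identity is then exactly $\kappa(x,[x,y])$, and the second is $\kappa(y,[x,y])$. The task therefore reduces to showing that these two elements of $\ZZ[x,y]$ vanish.

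Over $\QQ$, invariance gives
\[\kappa(x,[x,y])=-\kappa([x,x],y)=0,\]
and similarly
\[\kappa(y,[x,y])=-\kappa(y,[y,x])=\kappa([y,y],x)=0,\]
using the antisymmetry of the bracket. To keep the argument honest at the level of coefficients, I would first establish the identity $\kappa([b_i,b_j],b_k)=\kappa(b_i,[b_j,b_k])$ for basis elements. Since $\kappa(u,v)=\Tr(\ad u\,\ad v)$ and $\ad$ is a Lie algebra homomorphism, we have
\[\Tr(\ad[b_i,b_j]\,\ad b_k)=\Tr\bigl((\ad b_i\ad b_j-\ad b_j\ad b_i)\ad b_k\bigr),\]
and cyclicity of the trace turns this into $\Tr(\ad b_i\,\ad[b_j,b_k])$. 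All entries of $\ad b_i$ are the integer structure constants $c_{ij}^k$, so this is an identity of integers, and $\kappa(b_k,b_l)\in\ZZ$.

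Expanding the first sum then gives
\[\sum_{k,l}\kappa(b_k,b_l)x_kQ_l=\sum_{i,j,k}x_kx_iy_j\,\kappa(b_k,[b_i,b_j]).\]
The coefficient $\kappa(b_k,[b_i,b_j])=\kappa([b_k,b_i],b_j)$ is antisymmetric in $(k,i)$, while $x_kx_i$ is symmetric, so the sum vanishes in $\ZZ[x,y]$. The second sum works the same way: it equals $\sum y_kx_iy_j\,\kappa(b_k,[b_i,b_j])$. Writing the coefficient as $-\kappa(b_k,[b_j,b_i])=-\kappa([b_k,b_j],b_i)$, it is antisymmetric in $(k,j)$ against the symmetric monomial $y_ky_j$.

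There is no real obstacle here. The only points needing care are that the Jacobi/homomorphism property of $\ad$ holds for the integral structure constants as given in the appendix, so that the argument is valid over $\ZZ$ rather than just over $\QQ$, and the bookkeeping of which index the bilinear form is applied to. As a sanity check, one could also verify the identities directly by computer from the table in Section~\ref{sec:app1}.
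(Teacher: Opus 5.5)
Your proposal is correct and takes the same approach as the paper: both identify the two sums as $\kappa(x,[x,y])$ and $\kappa(y,[x,y])$ and conclude by invariance of the Killing form, and your antisymmetry argument on the coefficients $\kappa(b_k,[b_i,b_j])$ just spells out the step the paper calls immediate. The extra care about working over $\ZZ$ is harmless but unnecessary, since an integer polynomial that vanishes in $\QQ[x,y]$ already vanishes in $\ZZ[x,y]$.
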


\begin{proof}
The above identities are respectively equivalent to the equalities $\kappa(x,[x,y])=0$ and $\kappa(y,[x,y])=0$, which are immediate from the invariance of the Killing form.
\end{proof}

Let $\lambda_l:=\sum\limits_{k=1}^{14} \kappa(b_k,b_l)x_k = \kappa(x,b_l)$ and $\mu_l:=\sum\limits_{k=1}^{14} \kappa(b_k,b_l)y_k = \kappa(y,b_l)$ be linear forms. Then, we can eliminate $Q_1$ and $Q_2$ from the above two equations to get:
\begin{corollary} \label{cor:genci}
We have the following identities in $\ZZ[x,y]$:
\[-\Delta Q_1(x,y)=\sum_{l=3}^{14}(\lambda_l\mu_2-\lambda_2\mu_l)Q_l(x,y),\]
\[\Delta Q_2(x,y)=\sum_{l=3}^{14}(\lambda_l\mu_1-\lambda_1\mu_l)Q_l(x,y)\]
where $\Delta=\lambda_1\mu_2-\lambda_2\mu_1$.
\end{corollary}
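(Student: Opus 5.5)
The identities follow from Lemma~\ref{lem:killing} by linear elimination. In the notation just introduced, the lemma says
\[\sum_{l=1}^{14}\lambda_l Q_l=0,\qquad \sum_{l=1}^{14}\mu_l Q_l=0\]
in $\ZZ[x,y]$, since $\sum_k\kappa(b_k,b_l)x_k=\lambda_l$ and similarly for $y$. I regard these as a $2\times 2$ linear system in the two unknowns $Q_1,Q_2$:
\begin{align*}
\lambda_1Q_1+\lambda_2Q_2&=-S_\lambda,\qquad S_\lambda:=\sum_{l=3}^{14}\lambda_lQ_l,\\
\mu_1Q_1+\mu_2Q_2&=-S_\mu,\qquad S_\mu:=\sum_{l=3}^{14}\mu_lQ_l.
\end{align*}
The coefficients lie in the commutative ring $\ZZ[x,y]$, and the coefficient determinant is $\Delta=\lambda_1\mu_2-\lambda_2\mu_1$.

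Next I apply Cramer's rule in the form of multiplying by the adjugate, which needs no division. To eliminate $Q_2$, I multiply the first equation by $\mu_2$ and the second by $\lambda_2$ and subtract:
\[\Delta Q_1=\lambda_2S_\mu-\mu_2S_\lambda=\sum_{l=3}^{14}(\lambda_2\mu_l-\lambda_l\mu_2)Q_l.\]
Negating both sides gives the first stated identity. To eliminate $Q_1$, I multiply the first equation by $\mu_1$ and the second by $\lambda_1$ and subtract:
\[(\lambda_2\mu_1-\lambda_1\mu_2)Q_2=\lambda_1S_\mu-\mu_1S_\lambda,\]
that is,
\[-\Delta Q_2=\sum_{l=3}^{14}(\lambda_1\mu_l-\lambda_l\mu_1)Q_l.\]
Negating gives $\Delta Q_2=\sum_{l\ge3}(\lambda_l\mu_1-\lambda_1\mu_l)Q_l$, which is the second identity.

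There is no real obstacle. The only thing to check is that every step stays in $\ZZ[x,y]$. This holds because the Killing form values $\kappa(b_k,b_l)$ are integers: the structure constants are integral, so the $\ad b_k$ are integer matrices. Hence $\lambda_l,\mu_l\in\ZZ[x,y]$, and the elimination uses only ring operations. The main care needed is with signs, which I would double-check by expanding $\Delta$ explicitly in each elimination.
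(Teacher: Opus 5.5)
Your proposal is correct and follows the paper's own route: the paper obtains the corollary by eliminating $Q_1$ and $Q_2$ from the two Killing-form relations of Lemma~\ref{lem:killing}. Your adjugate computation carries out that elimination explicitly and division-free in $\ZZ[x,y]$, and both resulting identities, including their signs, match the statement.
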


(See Section~\ref{sec:app1} for an explicit computation of $\Delta$.)

\begin{remark}
The above corollary has the following geometric interpretation: In the coordinate ring of the basic open subscheme $\cc_{\Delta}$ of $\cc$ characterized by $\Delta\neq 0$, it follows that the polynomials $Q_1$ and $Q_2$ lie in the ideal generated by $Q_i$'s for $3\leq i\leq 14$. Thus, the commuting ideal is generated by these $12$ polynomials, and since $28-12=16=\dim(\cc)=\dim(\cc_{\Delta})$, we conclude that $\cc_{\Delta}$ is a complete intersection and that $\cc$ is generically a complete intersection. (Note that $C_{\Delta}$ is non-empty: It contains the pair $(h_{\alpha},h_{\beta})$ where $\Delta=192$.) Thus, the two Killing-form relations above account for the discrepancy between the $14$ defining equations and the co-dimension $12$ of the commuting scheme.
\end{remark}

\section{Proof of the main theorem} 

Here is a roadmap to the proof of our main theorem: (i) We find a system of parameters $s$ for the ring $\aaa$ (Proposition~\ref{prop:m2-first}) and show that $\aaa$ needs $\leq 1825$ generators over a polynomial ring $T$ in $16$ variables (Corollary~\ref{cor:upper}). (ii) We show that at a special fibre of the ring $T$, the ring $\aaa$ has 1825 simple points modulo a prime (Proposition~\ref{prop:m2-second}), and use Hensel's lemma to lift them to the generic fibre over characteristic zero (Proposition~\ref{prop:Hensel}). (iii) We use this to show that $\aaa$ is free (Theorem~\ref{main:thm}), and hence Cohen-Macaulay, reduced and normal (Corollary~\ref{cor:maincor}).

\label{sec:mainmath}

\subsection{System of parameters and upper bound on rank} \label{sec:sop}

We define the following permutation $\sigma$ on the set $\{1,2,\dots, 14\}$:
\[\begin{array}{c|cccccccccccccc}
i&1&2&3&4&5&6&7&8&9&10&11&12&13&14\\ \hline
\sigma(i)&10&14&6&7&2&13&12&11&3&4&1&9&8&5
\end{array}\]

(Some motivation for the above definition is given in Remark~\ref{rem:perm}.)

We use the definition above to define a system of parameters for the algebra $\aaa$. To that end, define the following elements in $\aaa$:
\[s_i:=x_i-y_{\sigma(i)} \qquad (1\leq i\leq 14), \qquad s_{15}=x_1, \qquad s_{16}=x_5.\]
We also fix the elements:
\[(u_1,u_2,\dots,u_{12})=(x_2,x_3,x_4,x_6,x_7,x_8,x_9,x_{10},x_{11},x_{12},x_{13},x_{14}).\]
Note that the elements $\{s_i, u_j: 1\leq i\leq 16, 1\leq j\leq 12\}$ are obtained by a linear change of the coordinates $\{x_i,y_j:1\leq i,j\leq 14\}$ and generate the polynomial ring $\QQ[x_i,y_j: 1\leq i,j\leq 14]$ and so, we can write:
\[\aaa=\QQ[s_i, u_j: 1\leq i\leq 16, 1\leq j\leq 12]/(P_k(s_i,u_j):1\leq k\leq 14),\]
where the polynomials $P_k(s_i,u_j)$ are exactly the defining commuting equations of the algebra $\aaa$ expressed in terms of the new coordinates we have introduced. (Therefore, the polynomials $P_k$ are obtained by a linear change of coordinates from the polynomials $Q_k$ defined in the previous section.) For any $c\in \QQ^{16}$, we define a slice of the algebra $\aaa$ by specializing $s_i$'s to c:
\[\aaa(c):=\QQ[u_j:  1\leq j\leq 12]/(P_k(c,u_j):1\leq k\leq 14).\]
In particular, when $c=0$, the above definition says that $\aaa(0)=\aaa/(s_1,s_2,\dots, s_{16})$.

For ease with Macaulay2 computations, we first work with the reduction of the above objects to the finite field $\FF_p$ where $p=32003$: We consider the $\FF_p$-algebra $\aaa_{\FF_p}$ and the images $\ol{s_i}$, $\ol{u_j}$ and $\ol{P_k}(\ol{s_i},\ol{u_j})$ of the above $s_i$, $u_j$ and $P_k(s_i,u_j)$ respectively in $\aaa_{\FF_p}$. 

\begin{proposition} \label{prop:m2-first}
The quotient $\aaa_{\FF_p}/(\ol{s_1},\ol{s_2},\dots, \ol{s_{16}})$ is finite dimensional as an $\FF_p$-vector space. Furthermore, the Hilbert series of this quotient is
\[H(t):= 1 + 12t + 64t^2 + 198t^3 + 388t^4 + 492t^5 + 395t^6 + 198t^7 + 64t^8 + 12t^9 + t^{10}.\]
In particular, $\dim_{\FF_p}(\aaa_{\FF_p}/(\ol{s_1},\ol{s_2},\dots, \ol{s_{16}}))=H(1)=1825.$
\end{proposition}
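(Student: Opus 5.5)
This is a finite computational statement, so I will prove it by an explicit Gröbner basis computation in Macaulay2 over $\FF_p$ with $p=32003$. First, I reduce the problem. Since each $\ol{s_i}$ is linear in the $x$'s and $y$'s, killing $\ol{s_1},\dots,\ol{s_{16}}$ amounts to substituting $y_{\sigma(i)}=x_i$ for $1\le i\le 14$ and $x_1=x_5=0$. This gives a graded isomorphism
\[\aaa_{\FF_p}/(\ol{s_1},\dots,\ol{s_{16}})\;\cong\;\FF_p[u_1,\dots,u_{12}]/(\ol{P_k}(0,u):1\le k\le 14),\]
that is, the slice $\aaa(0)$ reduced mod $p$. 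Each $\ol{P_k}(0,u)$ is a homogeneous quadric in the twelve variables $u_j$, obtained from $Q_k(x,y)$ by setting $y_j=x_{\sigma^{-1}(j)}$ and $x_1=x_5=0$. I will write these fourteen quadrics down explicitly from the structure constants in Section~\ref{sec:app1}. Concretely, $Q_k(x,\,x\circ\sigma^{-1})$ is the $k$-th coordinate of $[x,\tau(x)]$, where $\tau$ is the linear map permuting the basis vectors according to $\sigma$, restricted to $x_1=x_5=0$.

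Next, I compute a Gröbner basis $G$ of the ideal $I=(\ol{P_k}(0,u))$ with respect to graded reverse lexicographic order. Finite dimensionality is equivalent to $I$ being primary to the irrelevant ideal. I will check it by verifying that for each variable $u_j$ some power $u_j^{m}$ occurs as a leading monomial of an element of $G$; Macaulay2's \texttt{dim} returning $0$ is the same check. Because $I$ is homogeneous, the Hilbert series of $R/I$ equals that of $R/\mathrm{in}(I)$. So I obtain $H(t)$ by counting standard monomials of $\mathrm{in}(I)$ degree by degree, or via \texttt{hilbertSeries} / \texttt{reduceHilbert}. The result should be the stated polynomial, with top degree $10$ and no standard monomials in degree $11$. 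Evaluating at $t=1$ then gives the dimension $1825$.

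A few sanity checks are worth making. The number of quadrics in degree $2$ should be $\binom{13}{2}-64=14$, which confirms that the fourteen quadrics are linearly independent. The socle degree should be $10$. I would also run an independent cross-check by computing the rank of the multiplication matrices in each degree, for instance in Python, to certify the coefficients.

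The main obstacle I expect is not conceptual but lies in getting the substitution exactly right. Because of the permutation $\sigma$ and the renaming $u_j$, it is easy to introduce an indexing error, and that would produce a non-zero-dimensional ideal or a wrong Hilbert series. I will therefore generate the quadrics programmatically from the structure-constant table rather than by hand. Another potential issue is that a Gröbner basis over $\FF_p$ in twelve variables could be expensive. However, since the quotient has length only $1825$ and the generators are quadrics, grevlex should terminate quickly, so I do not anticipate real difficulty there.
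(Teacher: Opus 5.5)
Your proposal is correct and matches the paper's proof. The paper also proves this proposition by a Macaulay2 Gr\"obner basis computation over $\FF_{32003}$, calling \texttt{hilbertSeries}/\texttt{reduceHilbert} and \texttt{dim}; the only cosmetic difference is that its code adjoins the sixteen linear forms $\ol{s_i}$ to the ideal rather than substituting $y_{\sigma(i)}=x_i$ and $x_1=x_5=0$ directly, and the paper itself remarks that your substitution variant runs slightly faster.
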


The above proposition follows from a Gr\"obner basis computation in Macaulay2, the code used is given in Section~\ref{sec:mac1}.

\begin{corollary} \label{cor:upper}
The elements $\{s_i\}_{1\leq i\leq 16}$ form a system of parameters for the algebra $\aaa$. Furthermore, as a $T:=\QQ[s_1,s_2,\dots, s_{16}]$-module,  $\aaa$ is generated by at most $1825$ elements.
\end{corollary}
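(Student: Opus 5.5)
The plan is to transfer the finite-field computation of Proposition~\ref{prop:m2-first} to characteristic zero via a semicontinuity argument over $\ZZ$ (or over $\ZZ_{(p)}$), and then apply the graded Nakayama lemma. Throughout, everything is graded: the $Q_k$ are homogeneous quadrics, and the $s_i$ and $u_j$ are linear forms. Set $R:=\ZZ_{(p)}$ with $p=32003$, and let $B:=\aaa_R/(s_1,\dots,s_{16})$. This is a finitely generated graded $R$-algebra with $B\otimes_R\FF_p=\aaa_{\FF_p}/(\ol{s_1},\dots,\ol{s_{16}})$ and $B\otimes_R\QQ=\aaa(0)$, since the change of coordinates $\{x_i,y_j\}\leftrightarrow\{s_i,u_j\}$ is defined over $\ZZ$ with inverse over $\ZZ$.

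The first step is to show $\dim_\QQ \aaa(0)_d\le \dim_{\FF_p} (B\otimes\FF_p)_d$ for every degree $d$. Each graded piece $B_d$ is a finitely generated $R$-module, namely the cokernel of an $R$-linear map between free modules of finite rank (the degree-$d$ part of the ideal). The rank of the reduction mod $p$ of a matrix over $R$ is at most its rank over $\QQ$, because a nonvanishing minor mod $p$ is nonvanishing in $R$. Hence the cokernel dimension over $\QQ$ is at most that over $\FF_p$. In particular $\aaa(0)_d=0$ for $d\ge 11$, and $\dim_\QQ\aaa(0)\le H(1)=1825$.

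Next I apply graded Nakayama. Since $\aaa(0)=\aaa/(s_1,\dots,s_{16})\aaa$ is finite-dimensional, choose homogeneous lifts $a_1,\dots,a_N\in\aaa$, with $N\le1825$, of a $\QQ$-basis of $\aaa(0)$. Let $M=\aaa/\sum Ta_i$. Then $M$ is a graded $T$-module, bounded below in degree $0$, with $M=(s_1,\dots,s_{16})M$. By induction on degree, $M=0$, so the $a_i$ generate $\aaa$ as a $T$-module. Consequently $\aaa$ is a finite $T$-module, $T\to\aaa$ is integral, and $\dim\aaa=\dim T'$, where $T'$ is the image of $T$ in $\aaa$.

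Finally, since $\dim\aaa=16$ (quoted in Section~\ref{sec:back}) and $\aaa$ is finite over the image of a polynomial ring in $16$ variables, that image must have dimension $16$. So $T\to\aaa$ is injective and the $s_i$ are algebraically independent. Equivalently, $\aaa/(s_1,\dots,s_{16})$ is Artinian and there are $16=\dim\aaa$ elements, so they form a homogeneous system of parameters.

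The only delicate point is the semicontinuity step: one must check that the ideal and the change of coordinates are genuinely defined over $\ZZ$, which holds because the structure constants are integral. One must also compare degreewise linear algebra rather than Gröbner bases. Everything else is standard.
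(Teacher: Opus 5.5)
Your proposal is correct and takes essentially the same route as the paper: a degreewise rank comparison (a maximal minor that is nonzero mod $p$ stays nonzero over $\QQ$) transfers the bound $1825$ from $\aaa_{\FF_p}/(\ol{s_1},\dots,\ol{s_{16}})$ to $\aaa(0)$. The differences are cosmetic: the paper lifts the specific mod-$p$ monomial spanning set $\ol\bb$ and shows it still spans, whereas you prove the dimension inequality and take any basis; you also spell out the graded Nakayama step and the use of $\dim\aaa=16$, which the paper compresses into ``this proves both the claims.''
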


\begin{proof}
Proposition~\ref{prop:m2-first} implies that there is a set of monomials $\ol\bb$ in the algebra $\aaa_{\FF_p}$ such that $|\ol{\bb}|=1825$ and their images span $\aaa_{\FF_p}/(\ol{s_1},\ol{s_2},\dots, \ol{s_{16}})$ as an $\FF_p$-vector space. Since there exists a surjection $\aaa_{\ZZ} \twoheadrightarrow \aaa_{\FF_p}$, we can lift the elements of $\ol\bb$ to monomials in $\aaa_{\ZZ}$, which can then be viewed as a set of monomials $\bb$ in $\aaa$.

Fix a non-negative integer $d$. Then, the degree $d$ component of the algebra $\aaa(0)$ is spanned by monomials in $u_i$'s of degree $d$ and is a finite dimensional vector space. Proposition~\ref{prop:m2-first} says that every monomial in the $\ol{u_i}$'s in $\aaa_{\FF_p}$ lies in the span of the elements of $\ol\bb$ and the polynomials $\ol{P_k}(0,\ol{u_j})$. Hence, a matrix with the coefficients of the elements of $\ol\bb$ of degree $d$ and those of the polynomials in the ideal generated by $\ol{P_k}(0,\ol{u_j})$ of degree $d$ has full column rank modulo $p$. Therefore, the same matrix with the coefficients of the elements of $\bb$ and the degree $d$ polynomials in the ideal generated by $P_k(0,u_j)$ has full column rank, proving that every degree $d$ monomial in the $u_j$'s lies in the span of these elements. Hence, the set $\bb$ is a spanning set for the algebra $\aaa(0)$. This proves both the claims.
\end{proof}

In Theorem~\ref{main:thm}, we will show that the above upper bound is sharp and that $\aaa$ is free as a $T$-module. This implies the following:

\begin{corollary}
We have a graded free decomposition of $\aaa$ as a $T$-module:
\begin{align*}
\aaa \simeq T \oplus T[-1]^{12} \oplus T[-2]^{64} \oplus T[-3]^{198} \oplus T[-4]^{388}\\\oplus T[-5]^{492}\oplus T[-6]^{395}\oplus &T[-7]^{198}\oplus T[-8]^{64}\oplus T[-9]^{12}\oplus T[-10].
\end{align*}
\end{corollary}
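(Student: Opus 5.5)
This corollary follows from the freeness of $\aaa$ over $T$ asserted in Theorem~\ref{main:thm}, together with the Hilbert series of $\aaa(0)$ from Proposition~\ref{prop:m2-first}. The plan is to produce a homogeneous basis of $\aaa$ over $T$ and then count its elements by degree.

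First I would record that everything is graded. The $s_i$ are homogeneous of degree $1$, so $T$ is a graded polynomial ring and $\aaa$ is a finitely generated graded $T$-module by Corollary~\ref{cor:upper}. Let $\mathfrak m=(s_1,\dots,s_{16})\subset T$ be the irrelevant ideal. By the graded Nakayama lemma, any homogeneous elements of $\aaa$ whose images form a $\QQ$-basis of $\aaa/\mathfrak m\aaa=\aaa(0)$ generate $\aaa$ minimally over $T$. Granting Theorem~\ref{main:thm}, $\aaa$ is free, and a minimal homogeneous generating set of a graded free module over a graded polynomial ring is a basis. Hence $\aaa\simeq\bigoplus_d T[-d]^{m_d}$ with $m_d=\dim_\QQ\aaa(0)_d$.

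The remaining task is to show $\dim_\QQ\aaa(0)_d$ equals the coefficient of $t^d$ in $H(t)$. Proposition~\ref{prop:m2-first} gives these dimensions only for $\aaa_{\FF_p}/(\ol{s_i})$. The argument of Corollary~\ref{cor:upper}, applied degreewise, yields $\dim_\QQ\aaa(0)_d\le\dim_{\FF_p}(\aaa_{\FF_p}/(\ol{s_i}))_d$. The rank matrix over $\ZZ$ has full column rank mod $p$, so it has full rank over $\QQ$, and the $\QQ$-span of the lifted monomials of degree $d$ covers $\aaa(0)_d$. On the other hand, Theorem~\ref{main:thm} shows that the total bound $1825$ is attained, so $\dim_\QQ\aaa(0)=1825=\sum_d\dim_{\FF_p}(\cdots)_d$. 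Since each degreewise inequality holds and the sums are equal, every inequality is an equality. This gives $m_d$ as the coefficients $1,12,64,198,388,492,395,198,64,12,1$.

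The main obstacle is the transfer of degreewise dimensions from $\FF_p$ to $\QQ$. This is exactly where sharpness (total rank $1825$ in characteristic zero, from the Hensel-lifting argument) is needed. Once it is in hand, the graded Nakayama argument and the degree count are routine.
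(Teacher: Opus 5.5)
Your proposal is correct and follows essentially the same route as the paper: freeness of rank $1825$ from Theorem~\ref{main:thm}, combined with the degreewise comparison with the $\FF_p$ quotient (lifting monomials as in Corollary~\ref{cor:upper}) and equality of the total dimensions, which is the argument in the proof of Corollary~\ref{cor:maincor}. A minor shortcut is also available: the lifted homogeneous monomials $\bb$ already give the basis in the proof of Theorem~\ref{main:thm}, so their degree distribution can be read off from $H(t)$ directly.
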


\subsection{Lower bound on rank} \label{sec:lower}

To compute a lower bound for the rank of $\aaa$ as a module over the ring $T=\QQ[s_1,s_2,\dots, s_{16}]$, we will first perform some computations at a certain affine slice $\aaa(c)$ for a conveniently chosen $c\in \QQ^{16}$. To this end, we consider the following vector $c\in\QQ^{16}$ which was obtained by lifting a randomly sampled vector $\ol{c}\in \FF_p^{14}$ (recall that $p=32003$) adjoined with two $1$'s at the end:
\[c = (13641, 22825, 27432, 6173, 21845, 18827, 20908, 8404,
5290, 9258, 14370, 10545, 3220, 26424, 1, 1).\]
(We took the last two coordinates $x_1$ and $x_5$ equal to 1 for convenience; this is not essential.)

\begin{proposition} \label{prop:m2-second}
There exist polynomials $h(z), \rho_j(z)\in \FF_p[z]$ and a linear form $\ell$ for $1\leq j\leq 12$ such that these provide a univariate parametrization for the system of equations $\ol{P_k}(\ol{c},u_j)=0$ for $1\leq k\leq 14$. In particular, the following equalities hold:
\begin{equation} \label{eq:sols}
\ol{P}_k(\ol{c},\rho_j(z))=0\text{ mod }h(z),
\end{equation}
\begin{equation} \label{eq:separate}
\ell(\rho_1(z),\rho_2(z),\dots, \rho_{12}(z))=z\text{ mod }h(z).
\end{equation}
Furthermore, these polynomials also satisfy the following conditions:
\begin{enumerate}[label=(\alph*)]
\item The polynomial $h(z)$ has degree $1825$ and is square-free, i.e. $(h,h')=1$.
\item The GCD of $h(z)$ and the determinant of the Jacobian $\big(\frac{\partial \ol{P_i}}{\partial u_j}\big)_{3\leq i\leq 14, 1\leq j\leq 12}$ at $u_j=\rho_j(z)$ is equal to $1$.
\item The GCD of $h(z)$ with the polynomial $192(\ol{c_{15}}(\ol{c_{16}}-\ol{c_5})-\rho_1(z)(\rho_9(z)-\ol{c_{11}}))$ is equal to $1$.
\end{enumerate}
\end{proposition}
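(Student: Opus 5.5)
This is a computational statement, so the plan is to construct the certificate $(h,\rho_1,\dots,\rho_{12},\ell)$ explicitly and then verify each listed property by exact arithmetic in $\FF_p[z]$. The construction proceeds in four steps.

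\emph{Step 1 (zero-dimensionality).} Form the ideal $I_{\ol c}=(\ol{P_k}(\ol c,u_j):1\le k\le 14)\subseteq \FF_p[u_1,\dots,u_{12}]$ in Macaulay2 and compute a Gr\"obner basis. From it we read off that $\FF_p[u]/I_{\ol c}$ is finite-dimensional; we expect dimension $1825$ for generic $\ol c$. This expectation rests on Proposition~\ref{prop:m2-first}: the upper bound from Corollary~\ref{cor:upper} (applied over $\FF_p$) combined with semicontinuity of fibre length.

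\emph{Step 2 (shape lemma).} Choose a random linear form $\ell=\sum a_ju_j$ with coefficients in $\FF_p$, and compute the minimal polynomial $h(z)$ of multiplication by $\ell$ on the finite-dimensional algebra $\FF_p[u]/I_{\ol c}$. This can be done by elimination: add the variable $z$ and the relation $z-\ell$, then use a lex Gr\"obner basis with $z$ last, or use FGLM. If $I_{\ol c}$ is radical and $\ell$ separates its $\ol{\FF}_p$-points, which holds for a generic choice of $\ell$ since $p=32003$ is large, then the lex basis is in shape position. It consists of $h(z)$ together with relations $u_j-\rho_j(z)$ for $\deg\rho_j<\deg h$. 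These $\rho_j$ give equation~\eqref{eq:sols} by construction, and \eqref{eq:separate} holds because $z-\ell$ lies in the ideal. If shape position fails, we resample $\ell$ and then $\ol c$.

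\emph{Step 3 (verification).} Independently of how they were found, the certificates are checked directly in Python or Macaulay2. First, substitute $\rho_j(z)$ into each $\ol{P_k}$ and reduce modulo $h$, confirming the result is zero; check \eqref{eq:separate} the same way. Next, check $\deg h=1825$ and $\gcd(h,h')=1$, which gives (a). For (b), form the $12\times 12$ Jacobian of $\ol{P_3},\dots,\ol{P_{14}}$ with respect to the $u_j$, evaluate it at $u_j=\rho_j(z)$, and compute its determinant in $\FF_p[z]/(h)$. To keep the computation manageable, perform Gaussian elimination over the quotient ring rather than expanding the determinant symbolically. Then verify that the resulting polynomial is coprime to $h$, equivalently invertible modulo $h$. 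For (c), the expression $192(\ol{c_{15}}(\ol{c_{16}}-\ol{c_5})-\rho_1(\rho_9-\ol{c_{11}}))$ should be $\Delta$ (see Section~\ref{sec:app1}) restricted to the slice. We substitute the $\rho_j$ and check $\gcd$ with $h$ equal to $1$; this says $\Delta\neq0$ at every root of $h$.

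\emph{Main obstacle.} The hard step is Step 2, because the lex Gr\"obner basis for a system with $1825$ solutions in $12$ variables is expensive, and the $\rho_j$ have degree up to $1824$. I would first compute a grevlex basis and convert it with FGLM. Alternatively, I would compute the multiplication matrix of $\ell$ on the $1825$-dimensional quotient, take its minimal polynomial, and recover each $\rho_j$ by solving a linear system in the basis $1,\ell,\dots,\ell^{1824}$. The determinant in (b) is the other heavy computation. Working modulo $h$ throughout keeps all degrees below $1825$, so it remains feasible.
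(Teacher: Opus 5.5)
Your proposal is correct and follows essentially the paper's route. The paper obtains $(h,\rho_j)$ from msolve, which itself works via a Gr\"obner basis followed by a change of ordering, much as you describe. It then recovers $\ell$ by solving a linear system in the coefficients of the $\rho_j$, and certifies (a)--(c) by exact arithmetic modulo $h$, computing the Jacobian determinant by elimination over $\FF_p[z]/(h)$. You should add one detail: $\FF_p[z]/(h)$ is only a product of fields, so elimination can reach a column with no unit entry even when the determinant is a unit. The paper's helper handles this by splitting $h$ with a gcd and recombining via CRT. In the other direction, your explicit check of \eqref{eq:sols} is a worthwhile addition, since the paper's script relies on msolve's output for that.
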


The computation of the above univariate parametrization was performed using the C library msolve. The conditions (a), (b) and (c) above were also checked using Macaulay2, the code is given in Section~\ref{sec:mac2}.

\begin{remark}
\begin{enumerate}
\item The non-zero Jacobian property in (b) of the above proposition only involves the last $12$ equations and not $\ol{P_1}$ and $\ol{P_2}$. This fact will be relevant when we lift solutions to characteristic zero below. (See Lemma~\ref{lem:Hensel}.)
\item The polynomial $192(\ol{c_{15}}(\ol{c_{16}}-\ol{c_5})-\rho_1(z)(\rho_9(z)-\ol{c_{11}}))$ in (c) is exactly the element $\Delta$ constructed in Corollary~\ref{cor:genci} after the substitution $s_i=\ol{c_i}$ and $u_j=\rho_j(z)$. This follows from the formula of $\Delta$ in terms of the $s_i$'s and $u_j$'s found in Section~\ref{sec:app1}.
\end{enumerate}
\end{remark}

\begin{corollary}\label{cor:finite-sol}
The system of equations $\ol{P_k}(\ol{c},u_j)=0$ for $1\leq k\leq 14$ has at least $1825$ distinct solutions in a finite extension $\FF_{p^t}$ of $\FF_p$ for some $t\geq 1$.
\end{corollary}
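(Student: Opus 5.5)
We take the splitting field $\FF_{p^t}$ of $h(z)$ over $\FF_p$ as the finite extension, and read off the solutions from the roots of $h$. Since $h$ has degree $1825$ and is square-free by condition (a) of Proposition~\ref{prop:m2-second}, it has exactly $1825$ distinct roots $z_1,\dots,z_{1825}$ in its splitting field. This field is a finite extension of $\FF_p$, so it has the form $\FF_{p^t}$ for some $t\geq 1$; concretely $t$ can be taken to be the least common multiple of the degrees of the irreducible factors of $h$.

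For each root $z_r$ we define the point $u^{(r)}:=(\rho_1(z_r),\dots,\rho_{12}(z_r))\in \FF_{p^t}^{12}$. By \eqref{eq:sols}, each polynomial $\ol{P_k}(\ol{c},\rho_j(z))$ lies in the ideal $(h(z))$ of $\FF_p[z]$, i.e. it equals $h(z)q_k(z)$ for some $q_k\in\FF_p[z]$. Evaluating this identity at $z=z_r$ gives $\ol{P_k}(\ol{c},u^{(r)})=0$ for all $1\leq k\leq 14$. So every $u^{(r)}$ is a solution of the system.

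It remains to show the $u^{(r)}$ are pairwise distinct, and \eqref{eq:separate} handles this. Write $\ell(\rho_1(z),\dots,\rho_{12}(z))=z+h(z)g(z)$ and evaluate at $z_r$; since $\ell$ is a linear form with coefficients in $\FF_p$, this yields $\ell(u^{(r)})=z_r$. Thus the map $r\mapsto u^{(r)}$ composed with $\ell$ recovers $z_r$, and distinct roots give distinct points. This produces $1825$ distinct solutions in $\FF_{p^t}^{12}$.

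None of these steps is a genuine obstacle. The only point needing care is to evaluate polynomial congruences modulo $h$ at actual roots, which requires working in a field containing those roots, hence the passage to $\FF_{p^t}$. Conditions (b) and (c) are not needed here; they will matter later for the Hensel lifting.
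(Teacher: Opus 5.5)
Your proposal is correct and follows the paper's own argument: the $1825$ distinct roots of the separable polynomial $h$ in a finite extension, evaluated through the $\rho_j$ to give solutions via \eqref{eq:sols}, and kept distinct by $\ell$ via \eqref{eq:separate}. You only spell out details the paper leaves implicit, such as evaluating the congruences at actual roots and using that $\ell$ has coefficients in $\FF_p$.
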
 
\begin{proof}
Since the polynomial $h(z)$ constructed in Proposition~\ref{prop:m2-second} is square-free of degree $1825$, it has exactly $1825$ distinct roots $\{\zeta_i\}_{1\leq i\leq 1825}$ in some finite extension of $\FF_p$. Then, using Equation~\eqref{eq:sols}, we get $1825$ distinct solutions: $(\rho_1(\zeta_i),\rho_2(\zeta_i),\cdots ,\rho_{12}(\zeta_i))$ for each $i$. The fact that these are all distinct follows from the fact that each of these solutions uniquely determines $\zeta_i$ via an application of $\ell$ as implied by Equation~\eqref{eq:separate}.
\end{proof}

We next show that these $1825$ solutions over $\FF_{p^t}$ can be lifted to points of the generic fiber in characteristic zero (that is, not necessarily just at the above choice of $c$). The lift to characteristic zero will be via the ring of Witt vectors. We define $\ww:=\ww(\FF_{p^t})$ to be the ring of Witt vectors over $\FF_{p^t}$. Instead of giving a precise definition of the ring $\ww$, we state some of its properties below that will be relevant to us and refer the reader to \cite{Serre} for more details:
\begin{enumerate}
\item The ring $\ww$ is an integral domain of characteristic zero.
\item It is a complete discrete valuation ring whose maximal ideal is given by $p\ww$ and we have the isomorphism $\ww/p\ww \simeq \FF_{p^t}$.
\end{enumerate}

We introduce the formal power series ring $\mathcal{R}:=\ww[[v_1,v_2,\dots, v_{16}]]$ and let $\ff:=\mathrm{Frac}(\mathcal{R})$ be its fraction field. Then, $\mathcal{R}$ is again a complete local ring with maximal ideal $\mathfrak{m}=(p,v_1,v_2,\dots, v_{16})$. Then, we have a ring homomorphism:
\begin{equation}\label{eq:embed}
\ZZ[s_1,s_2,\dots,s_{16}]\to \mathcal{R}
\end{equation}
\[s_i\mapsto v_i+c_i.\]
(Thus, we have that $s_i$ evaluates to $c_i$ when we quotient by $v_i$.) It is clear that the above map has zero kernel, and thus extends to an embedding of fields:
\begin{equation}\label{eq:embedf}
\mathcal{L}:=\mathrm{Frac}(T)=\QQ(s_1,s_2,\dots, s_{16})\to \ff.
\end{equation}

We will apply the following `Hensel lemma' to lift the solutions from Corollary~\ref{cor:finite-sol}:

\begin{lemma}\label{lem:Hensel}\cite[Exercise 7.26]{Eisenbud}
Given $f_1,f_2,\dots,f_n \in R[z_1,z_2,\dots,z_n]$ where $R$ is a local ring that is complete with respect to its maximal ideal $\mathfrak m$. Suppose we are given $(a_1,a_2,\dots, a_n)\in R^n$ such that $f_i(a_1,a_2,\dots, a_n) \in \mathfrak m$ for all $i$ and suppose the determinant of the Jacobian matrix $\big(\frac{\partial f_i}{\partial z_j}\big)_{1\leq i,j\leq n}$ is invertible in $R$ when evaluated at the point $(a_1,a_2,\dots, a_n)$. Then, there exist $(b_1,b_2,\dots, b_n)\in R^n$ such that $f_i(b_1,b_2,\dots, b_n)=0$ for all $i$ and $b_j = a_j\mod\mathfrak{m}$ for all $j$.
\end{lemma}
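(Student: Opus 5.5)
We prove the statement by the multivariate Newton method: we build a sequence of approximate roots that converges $\mathfrak m$-adically, using the completeness of $R$. Write $f=(f_1,\dots,f_n)$ and $J(z)=\big(\frac{\partial f_i}{\partial z_j}\big)$. Set $b^{(0)}:=a=(a_1,\dots,a_n)$. Since $\det J(a)$ is a unit in $R$, the matrix $J(a)$ is invertible over $R$ by the adjugate formula. The key point is that $\det J(b)\equiv \det J(a)\pmod{\mathfrak m}$ whenever $b\equiv a \pmod{\mathfrak m}$. Hence $J(b)$ stays invertible along any sequence of candidate roots congruent to $a$, because a local ring has units exactly the elements outside $\mathfrak m$.

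Define inductively
\[
b^{(k+1)}:=b^{(k)}-J(b^{(k)})^{-1}f(b^{(k)}).
\]
We claim that $f(b^{(k)})\in \mathfrak m^{2^k}$ (componentwise) and $b^{(k+1)}-b^{(k)}\in \mathfrak m^{2^k}$.

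The base case $k=0$ is the hypothesis. For the inductive step, put $\delta:=-J(b^{(k)})^{-1}f(b^{(k)})\in(\mathfrak m^{2^k})^n$. Taylor expansion of polynomials over a commutative ring gives
\[
f(b^{(k)}+\delta)=f(b^{(k)})+J(b^{(k)})\delta+E,
\]
where every entry of $E$ lies in the ideal generated by the products $\delta_i\delta_j$. The first two terms cancel by the choice of $\delta$, so $f(b^{(k+1)})\in \mathfrak m^{2^{k+1}}$. Moreover $b^{(k+1)}\equiv b^{(k)}\equiv a \pmod{\mathfrak m}$, so $J(b^{(k+1)})$ remains invertible and the induction continues.

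By the second claim, each coordinate sequence $(b^{(k)}_j)_k$ is Cauchy in the $\mathfrak m$-adic topology. Since $R$ is complete, it converges to some $b_j\in R$, and $b_j\equiv a_j \pmod{\mathfrak m}$ because every term lies in $a_j+\mathfrak m$ and $a_j+\mathfrak m$ is closed. Polynomial maps are continuous for the $\mathfrak m$-adic topology, so $f_i(b)=\lim_k f_i(b^{(k)})$, and this limit lies in $\bigcap_k \mathfrak m^{2^k}$. This intersection is $0$: in the Noetherian case (which covers $\mathcal R$) this is Krull's intersection theorem, and in general it follows from the convention that "complete" includes Hausdorff. Thus $f_i(b)=0$ for all $i$.

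The only step needing care is the Taylor expansion over a general commutative ring. Here we note that $f(z+w)-f(z)-J(z)w$ lies in the ideal $(w_iw_j)$ of $R[z,w]$. It suffices to check this on monomials, where it follows from expanding $(z+w)^\alpha$. Everything else is routine bookkeeping.
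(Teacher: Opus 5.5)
Your proof is correct: the Newton iteration with $f(b^{(k)})\in\mathfrak m^{2^k}$, the fact that $\det J$ remains a unit on $a+\mathfrak m$, the Taylor remainder lying in the ideal $(w_iw_j)$, and separatedness (via Krull for the Noetherian ring $\mathcal R$, or as part of the definition of completeness) together produce a root $b\equiv a \pmod{\mathfrak m}$. The paper gives no proof of its own and only cites Eisenbud. There the result is the multivariate form of Hensel's lemma, proved by exactly this Newton argument with division by $F'(a)$ replaced by inverting the Jacobian matrix, so your argument is essentially the intended one.
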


We cannot use the above lemma directly since the number of equations we have ($\ol{P_k}$ for $1\leq k\leq 14$) exceeds the number of variables we are working with ($u_j$ for $1\leq j\leq 12$). However, by Corollary~\ref{cor:genci}, on the open subset of $\cc$ characterised by $\Delta\neq 0$, the polynomials $\ol{P_1}$ and $\ol{P_2}$ lie in the ideal generated by $\ol{P_i}$'s for $3\leq i\leq 14$. Therefore, we can perform the lifting:

\begin{proposition} \label{prop:Hensel}
The algebra $\aaa\otimes_T \mathcal{L}(\simeq \aaa_{\ZZ}\otimes_{\ZZ[s_1,s_2,\dots, s_{16}]} \mathcal{L})$ has at least $1825$ $\mathcal{F}$-valued points after scalar extension along~\eqref{eq:embedf}.
\end{proposition}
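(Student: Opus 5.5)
We apply Lemma~\ref{lem:Hensel} over $R=\mathcal{R}=\ww[[v_1,\dots,v_{16}]]$, which is complete local with $\mathcal{R}/\mathfrak{m}\simeq\FF_{p^t}$. Take $n=12$ and $f_k:=P_{k+2}(v+c,z)\in\mathcal{R}[z_1,\dots,z_{12}]$ for $1\leq k\leq 12$, obtained by substituting $s_i\mapsto v_i+c_i$ via \eqref{eq:embed} into the integral polynomials $P_{k+2}$ and setting $u_j=z_j$. For each of the $1825$ roots $\zeta_i\in\FF_{p^t}$ of $h$ from Corollary~\ref{cor:finite-sol}, choose lifts $a^{(i)}_j\in\ww\subset\mathcal{R}$ of $\rho_j(\zeta_i)$ (for instance Teichmüller lifts, or any lifts). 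Reduction modulo $\mathfrak{m}$ sends $v_i\mapsto0$ and $c_i\mapsto\ol{c_i}$, so $f_k(a^{(i)})$ reduces to $\ol{P_{k+2}}(\ol c,\rho(\zeta_i))=0$ by \eqref{eq:sols}. Hence $f_k(a^{(i)})\in\mathfrak{m}$. The Jacobian determinant of $(f_k)$ with respect to the $z_j$ reduces modulo $\mathfrak{m}$ to the determinant in Proposition~\ref{prop:m2-second}(b) evaluated at $\zeta_i$. That value is nonzero because $\gcd$ with $h$ is $1$, so the determinant is a unit in $\mathcal{R}$. Lemma~\ref{lem:Hensel} then yields $b^{(i)}\in\mathcal{R}^{12}$ with $f_k(b^{(i)})=0$ for all $k$ and $b^{(i)}\equiv a^{(i)}\bmod\mathfrak{m}$.

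Next we must check that the remaining equations $P_1$ and $P_2$ also vanish at $b^{(i)}$. Corollary~\ref{cor:genci} holds in $\ZZ[x,y]$, hence after the integral linear change of coordinates in $\ZZ[s,u]$, and then after specialization to $\mathcal{R}$. It gives $\Delta\cdot P_1(v+c,b^{(i)})$ and $\Delta\cdot P_2(v+c,b^{(i)})$ as combinations of the $P_k(v+c,b^{(i)})$ with $k\geq3$, all of which vanish. By the remark following Proposition~\ref{prop:m2-second}, $\Delta$ evaluated at $(v+c,b^{(i)})$ reduces modulo $\mathfrak{m}$ to $192(\ol{c_{15}}(\ol{c_{16}}-\ol{c_5})-\rho_1(\zeta_i)(\rho_9(\zeta_i)-\ol{c_{11}}))$, which is nonzero by (c). So $\Delta$ is a unit, and $P_1$ and $P_2$ vanish as well. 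Thus each $b^{(i)}$ is a common zero of all $P_k$ over $\mathcal{R}$. We also need $192$ to be invertible mod $p=32003$, which is clear, and that the coordinate change defining the $P_k$ is integral, which holds since it only involves differences of coordinates.

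Each $b^{(i)}$ gives a ring homomorphism $\aaa_{\ZZ}\to\mathcal{R}$ sending $s_i\mapsto v_i+c_i$ and $u_j\mapsto b^{(i)}_j$. This map is compatible with \eqref{eq:embed}, so it induces an $\ff$-point of $\aaa_{\ZZ}\otimes_{\ZZ[s]}\mathcal{L}\otimes_{\mathcal{L}}\ff$ via \eqref{eq:embedf}; note that $\mathcal{L}$ contains $\QQ$, so $\ZZ[s]\to\QQ(s)$ poses no issue. The isomorphism $\aaa\otimes_T\mathcal{L}\simeq\aaa_\ZZ\otimes_{\ZZ[s]}\mathcal{L}$ holds because $\aaa=\aaa_\ZZ\otimes\QQ$. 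The points are distinct because their reductions modulo $\mathfrak{m}$ are the distinct solutions of Corollary~\ref{cor:finite-sol}, separated by $\ell$ via \eqref{eq:separate}.

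The main obstacle is the bookkeeping in the second step. We need $\Delta$ expressed in the $(s,u)$ coordinates to specialize exactly to the polynomial in (c). We also need the identities of Corollary~\ref{cor:genci} to survive the integral base change, so that invertibility of $\Delta$ in $\mathcal{R}$ genuinely forces $P_1$ and $P_2$ to vanish. The Hensel step itself is routine once (b) is in place.
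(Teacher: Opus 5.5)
Your proposal is correct and follows the paper's proof: you Hensel-lift the $1825$ solutions of the twelve equations $P_3,\dots,P_{14}$ over $\ww[[v_1,\dots,v_{16}]]$ using (b), then use the identities of Corollary~\ref{cor:genci} together with (c), which makes $\Delta$ a unit, to force $P_1=P_2=0$. Your extra bookkeeping (integrality of the coordinate change, distinctness of the lifts via their distinct reductions modulo $\mathfrak{m}$, and passing from $\mathcal{R}$-points to $\ff$-points) fills in details the paper leaves implicit.
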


\begin{proof}
We consider the following system of equations over $\mathcal{R}$ in the variables $u_j$ for $1\leq j\leq 12$:
\[f_k(u_j):=P_k(v_i+c_i,u_j),\]
where $3\leq k\leq 14$. By Corollary~\ref{cor:finite-sol}, this system of equations has at least $1825$ solutions modulo $\mathfrak{m}$. Therefore, using Proposition~\ref{prop:m2-second}(b) to satisfy the hypotheses of Lemma~\ref{lem:Hensel}, we can lift these to get $1825$ solutions to the system in $\mathcal{R}$. Again, by Proposition~\ref{prop:m2-second}(c) and Corollary~\ref{cor:genci}, these solutions automatically also satisfy $P_1(v_i+c_i,u_j)=P_2(v_i+c_i,u_j)=0$.
\end{proof}

\begin{theorem} \label{main:thm}
The algebra $\aaa$ is a free $T$-module of rank $1825$.
\end{theorem}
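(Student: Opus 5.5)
We combine the upper bound of Corollary~\ref{cor:upper} with the lower bound of Proposition~\ref{prop:Hensel}. The argument rests on a standard fact for a finitely generated graded module $M$ over the graded polynomial ring $T$: if $\mu(M)$ is the minimal number of homogeneous generators and $r=\dim_{\mathcal{L}} M\otimes_T\mathcal{L}$ is the generic rank, then $r\leq \mu(M)$, and equality holds if and only if $M$ is free. Indeed, take a graded surjection $T^{\mu}\twoheadrightarrow M$. If $\mu=r$, then after tensoring with $\mathcal{L}$ it becomes a surjection $\mathcal{L}^{r}\to\mathcal{L}^{r}$, hence an isomorphism. So the kernel $N$ is torsion with $N\otimes\mathcal{L}=0$. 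But $N\subseteq T^\mu$ is torsion-free, so $N=0$.

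By Corollary~\ref{cor:upper}, $\aaa$ is finitely generated over $T$ with $\mu(\aaa)\leq 1825$. The graded Nakayama lemma identifies $\mu(\aaa)$ with $\dim_{\QQ}\aaa(0)$, and the spanning set $\bb$ gives $\dim_{\QQ}\aaa(0)\leq 1825$. It therefore suffices to prove that the generic rank satisfies $r\geq 1825$.

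The algebra $\aaa\otimes_T\mathcal{L}$ is a finite-dimensional $\mathcal{L}$-algebra of dimension $r$. Base change along~\eqref{eq:embedf} gives $B:=\aaa\otimes_T\ff$, an $\ff$-algebra of dimension $r$. Proposition~\ref{prop:Hensel} supplies $1825$ $\ff$-algebra homomorphisms $\phi_i\colon B\to\ff$, and we must check that they are pairwise distinct. Each $\phi_i$ is determined by the images $b^{(i)}_j\in\mathcal{R}$ of the $u_j$. By Lemma~\ref{lem:Hensel}, these reduce modulo $\mathfrak{m}$ to $\rho_j(\zeta_i)$, and Corollary~\ref{cor:finite-sol} says the tuples $(\rho_j(\zeta_i))_j$ are pairwise distinct. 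Hence the points $b^{(i)}$ are distinct.

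Distinct $\ff$-points of a finite $\ff$-algebra have pairwise distinct kernels, which are maximal ideals $\mathfrak{n}_1,\dots,\mathfrak{n}_{1825}$ of $B$. By the Chinese remainder theorem, $B\twoheadrightarrow\prod_i B/\mathfrak{n}_i\simeq\ff^{1825}$. This gives $r=\dim_{\ff}B\geq 1825$, so $r=\mu(\aaa)=1825$ and $\aaa$ is free of rank $1825$.

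The main obstacle is making sure the Hensel lifts are genuinely points of $\aaa\otimes_T\ff$. That requires $P_1$ and $P_2$ to vanish at the lifts, which rests on Proposition~\ref{prop:m2-second}(c). Condition (c) shows $\Delta\not\equiv 0$ modulo $\mathfrak{m}$ at each lift, so $\Delta$ is a unit in $\mathcal{R}$, and Corollary~\ref{cor:genci} then forces $P_1=P_2=0$. Proposition~\ref{prop:Hensel} already records this, so in the present proof I would only cite it and spend the care on the rank and Nakayama bookkeeping above.
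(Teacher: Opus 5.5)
Your proposal is correct and follows essentially the same route as the paper. The paper also bounds the rank above by Corollary~\ref{cor:upper}, bounds $\dim_{\ff}(\aaa\otimes_T\ff)$ below by applying the Chinese Remainder Theorem to the $1825$ $\ff$-points from Proposition~\ref{prop:Hensel}, and concludes from torsion-freeness of $T^{1825}$ that the surjection is an isomorphism. Your version adds details the paper leaves implicit: that the lifted points are pairwise distinct because their reductions modulo $\mathfrak{m}$ are, and that flatness of $\mathcal{L}$ over $T$ is used in the kernel argument. The graded Nakayama step is unnecessary but harmless.
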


\begin{proof}
Extending scalars via Equation~\eqref{eq:embedf}, the above proposition implies that the algebra $(\aaa\otimes_T \mathcal{L})\otimes _{\mathcal{L}} \ff = \aaa\otimes_T \ff$ has at least $1825$ points valued over the field $\ff$. Evaluating at each of these points gives us a map:
\[\Phi: \aaa\otimes_T \ff \to \ff^{1825}.\]
By the Chinese Remainder Theorem, the above map is surjective, showing that $\dim_{\ff}(\aaa\otimes_T \ff)\geq 1825.$ However, Corollary~\ref{cor:upper} implies that $\dim_{\ff}(\aaa\otimes_T \ff)\leq 1825$. This shows that $\dim_{\ff}(\aaa\otimes_T \ff)= 1825$ and that the map $\Phi$ is an isomorphism.

By Corollary~\ref{cor:upper}, there exists a surjection of $T$-modules $\phi:T^{1825}\twoheadrightarrow \aaa$ and by the dimension argument above, the map $\phi$ becomes an isomorphism of $\ff$-vector spaces when we tensor by $\ff$ over $T$. Therefore, since $T^{1825}$ is torsion-free, we conclude that $\phi$ must have been an isomorphism to start with, thus proving the theorem.
\end{proof}

Finally, let $K$ be an arbitrary field of characteristic zero. 

\begin{corollary} \label{cor:maincor}
The scheme $\cc_K$ is reduced, Cohen-Macaulay and normal. Furthermore, the sequence $(s_1,s_2,\dots, s_{16})$ is a regular sequence for $\cc_K$ and the Hilbert series for $\cc_K$ is given by $H(t)(1-t)^{-16}$, where $H(t)$ is the polynomial in Proposition~\ref{prop:m2-first}.
\end{corollary}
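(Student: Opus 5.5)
We first treat $K=\QQ$ and then pass to arbitrary $K$ of characteristic zero by flat base change. By Theorem~\ref{main:thm}, $\aaa$ is a free module over the polynomial ring $T=\QQ[s_1,\dots,s_{16}]$, with the $s_i$ homogeneous of degree $1$. Freeness over a Noether normalization generated by a homogeneous system of parameters is equivalent to Cohen-Macaulayness for a graded ring. Concretely, $s_1,\dots,s_{16}$ is a $T$-regular sequence, hence $\aaa$-regular because $\aaa$ is $T$-free, and $\dim\aaa=16$. So $\aaa$ is Cohen-Macaulay at the irrelevant ideal. Since the non-Cohen-Macaulay locus is closed and $\mathbb{G}_m$-stable, and every nonempty closed conic subset of $\cc$ meets the origin, $\aaa$ is Cohen-Macaulay everywhere.

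For the Hilbert series, the graded free decomposition in the corollary following Corollary~\ref{cor:upper} gives $H_{\aaa}(t)=H(t)\,H_T(t)=H(t)(1-t)^{-16}$. Here we use that a homogeneous basis of $\aaa$ over $T$ reduces to a basis of $\aaa/(s)$. We also need that this graded dimension is the same over $\QQ$ as over $\FF_p$. The rank equality $1825$ is exactly what forces $\dim_\QQ\aaa(0)=1825$, and since $\dim_\QQ \aaa(0)_d\le\dim_{\FF_p}(\aaa_{\FF_p}/(\ol s))_d$ degree by degree (the lifting argument of Corollary~\ref{cor:upper}, done per degree) and the totals agree, each graded piece agrees.

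For reducedness, Cohen-Macaulay rings satisfy $S_1$, and we already know $\cc$ is generically reduced. Together these give $R_0+S_1$, hence reducedness. We should check generic reducedness over $\QQ$ rather than over $\ol{\QQ}$: it descends, since $\aaa\to\aaa\otimes\ol\QQ$ is faithfully flat and a nilpotent would survive.

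Normality is the main obstacle. By Serre's criterion it suffices to show $R_1$, i.e.\ that the singular locus has codimension $\ge 2$ in $\cc$ ($S_2$ is automatic). From the Background section, points $(x,y)$ with $x$ (or $y$) regular semisimple are smooth. More generally, $(x,y)$ is smooth whenever $\dim\mathfrak{z}(x)\cap\mathfrak z(y)$ is minimal, via the rank of $d\mu$: the image of $d\mu$ is $[x,\Gg_2]+[y,\Gg_2]$, the orthogonal complement of $\mathfrak z(x)\cap\mathfrak z(y)$ under $\kappa$. So if $\mathfrak z(x)\cap\mathfrak z(y)$ has dimension $2$, the tangent space has dimension $16$. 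The plan is therefore to bound the dimension of the locus where $\dim(\mathfrak z(x)\cap\mathfrak z(y))\ge 3$ by $14$. I would stratify by the $G_2$-orbit type of the pair, using that commuting pairs lie in $\mathfrak z(x)$ for $x$ in a Jordan-type stratum, and estimate dimensions for each sheet (regular, subregular, and Levi types $A_1$, $\tilde A_1$). If this bookkeeping proves awkward, I would fall back on a computational check: compute a Jacobian ideal (minors of size $12$) plus the commuting ideal in Macaulay2 over $\FF_p$ and verify that its dimension is $\le 14$, lifting to $\QQ$ by the same upper-semicontinuity argument as in Corollary~\ref{cor:upper}.

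Finally, for general $K\supseteq\QQ$ we have $\aaa_K=\aaa\otimes_\QQ K$, which is still free over $T_K$ on the same basis. So Cohen-Macaulayness, the regular sequence, and the Hilbert series transfer immediately. Reducedness and normality transfer because $\QQ$ is perfect, so geometric reducedness and normality are preserved under field extension.
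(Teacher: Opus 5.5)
Your arguments for Cohen--Macaulayness, the regular sequence, the Hilbert series and reducedness are essentially the paper's. The graded local-to-global step stands in for the citation of \cite[Corollary 18.17]{Eisenbud}. You also rightly do not take the graded decomposition for granted: you justify it by the degree-wise comparison of $\aaa(0)$ with its reduction mod $p$, exactly as the paper does.

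The one real gap is normality. The paper gets $R_1$ by citing \cite[Corollary 1.9(i)]{Pop08}. You correctly reduce $R_1$ to showing that the irregular locus $\{(x,y)\in\cc:\dim(\mathfrak z(x)\cap\mathfrak z(y))\ge 3\}$ has dimension at most $14$, which is the codimension bound behind Popov's result. But you then only announce a plan: the stratification is never carried out. The computational fallback is also unrealistic if done naively, since the Jacobian is a $14\times 28$ matrix with $\binom{14}{12}\binom{28}{12}\approx 2.8\cdot 10^{9}$ minors of size $12$.

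The gap closes in a few lines if you stratify by Jordan (decomposition) classes over $\ol K$.
\begin{enumerate}
\item Write $x=x_s+x_n$ and $\mathfrak l=\mathfrak z(x_s)$. This is conjugate to $\Hh$, to a Levi subalgebra of type $A_1$ (long or short), or to $\Gg_2$.
\item The Jordan class of $x$ has dimension $14+\dim\mathfrak z(\mathfrak l)-\dim\mathfrak z(x)$. So the pairs in $\cc$ whose first entry lies in it form a set of dimension $14+\dim\mathfrak z(\mathfrak l)$. For nilpotent $x$ this is $14$.
\item If $\mathfrak l=\Hh$, or $\mathfrak l$ has type $A_1$ and $x_n\neq 0$, then $\mathfrak z(x)$ is abelian of dimension $2$ and contains $y$. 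Hence $\mathfrak z(x)\cap\mathfrak z(y)=\mathfrak z(x)$ and every such pair is regular.
\item In the remaining case $x=x_s$ and $\mathfrak z(x)=\mathfrak l=\mathfrak z(\mathfrak l)\oplus\mathfrak{sl}_2$. Then $\mathfrak z(x)\cap\mathfrak z(y)=\mathfrak z_{\mathfrak l}(y)$ has dimension $2$ unless $y\in\mathfrak z(\mathfrak l)$. These irregular pairs form a set of dimension $11+1=12$.
\end{enumerate}
Hence the irregular locus has codimension at least $2$ in the irreducible $16$-dimensional $\cc_{\ol K}$. Your tangent-space computation then gives $R_1$, and with Cohen--Macaulayness, normality of $\aaa_{\ol K}$. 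Normality descends to $\aaa_K$ along the faithfully flat map $\aaa_K\to\aaa_{\ol K}$; you invoke descent only for reducedness.

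With this count inserted, or with Popov's theorem cited as in the paper, your proof is complete. The hands-on count has the merit of being self-contained for $\Gg_2$, whereas Popov's result holds uniformly for all reductive Lie algebras.
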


\begin{proof}
Note that $\aaa_K=\aaa\otimes_{\QQ} K$. Therefore, Theorem~\ref{main:thm} implies that $\aaa_K$ is a free module over the polynomial ring $K[s_1,s_2,\dots,s_{16}]$. This implies that $\cc_K$ is Cohen-Macaulay (see, for example, \cite[Corollary 18.17]{Eisenbud}). This along with generic reducedness proves that $\cc_K$ is reduced. Finally, by \cite[Corollary 1.9(i)]{Pop08}, the scheme $\cc_K$ is regular in codimension $1$, and so, we get that $\cc_K$ is normal by Serre's criterion. 

That $(s_1,s_2,\dots, s_{16})$ is a regular sequence for $\cc$ follows from the fact that it is a system of parameters (by Corollary~\ref{cor:upper}) and that $\cc$ is Cohen-Macaulay. Therefore, we conclude that the Hilbert series of $\cc$ is $(1-t)^{-16}$ times the Hilbert series of $\aaa/(s_1,s_2,\dots,s_{16})$. The latter Hilbert series should be smaller degree-wise than the Hilbert series of the same scheme modulo $p$, since a homogeneous generating set of monomials of a given degree modulo $p$ can be lifted to $\ZZ$, and thus, to $\QQ$. However, since the degree of the scheme modulo $p$ and in characteristic zero are both $1825$, both Hilbert series must coincide, finishing the proof. The claim for general $K$ follows from this immediately.
\end{proof}

\begin{remark}
\begin{enumerate}
\item The arguments in \cite{Pop08} are over an algebraically closed $K$, but can be extended to a general $K$ since the irregular locus of $\cc_K$ is cut out by certain rank conditions defined over $K$ which remain unchanged when extending to $\ol K$. 

\item Theorem~\ref{main:thm} above also gives a more direct proof of reducedness: Since $\aaa$ is a free $T$-module, the natural map $\aaa\to \aaa\otimes_T \mathcal{L}$ is an embedding. Hence, as $\aaa\otimes_T \mathcal{L}$ injects into $\aaa \otimes_T \ff$, which is isomorphic to a product of fields, it is reduced and does not have any nilpotents, and therefore neither does $\aaa$. This proves reducedness of $\cc$. Tenosring by $K$ over $\cc$ gives reducedness of $\cc_K$.
\end{enumerate}
\end{remark}

\subsection{Positive characteristic} \label{sec:p>0}

Fix a prime $p\geq 5$ and a field $K$ of characteristic $p$. (We don't necessarily assume that $p=32003$ in this section.) We first note a criterion for checking Cohen-Macaulayness of $\aaa_K$.

\begin{lemma} \label{lem:p-crit}
Suppose we have the dimension inequality: 
\[\dim_{\FF_p}(\aaa_{\FF_p}/(\ol{s_1}, \ol{s_2},\dots,\ol{s_{16}}))\leq 1825.\]
Then, the ring $\aaa_K$ is Cohen-Macaulay.
\end{lemma}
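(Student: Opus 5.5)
The plan is to show that $\aaa_{\FF_p}$ is a free module of rank $1825$ over $T_{\FF_p}:=\FF_p[\ol{s_1},\dots,\ol{s_{16}}]$, and then pass to $K$ by base change. Since $\aaa_K=\aaa_{\FF_p}\otimes_{\FF_p}K$, freeness over $K[\ol{s_1},\dots,\ol{s_{16}}]$ follows, and Cohen-Macaulayness follows as in Corollary~\ref{cor:maincor} via \cite[Corollary 18.17]{Eisenbud}.

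\textbf{Step 1 (upper bound).} Everything is graded and the $\ol{s_i}$ are homogeneous of degree $1$. The hypothesis says $\aaa_{\FF_p}/(\ol{s_1},\dots,\ol{s_{16}})$ is finite dimensional, of dimension at most $1825$. By the graded Nakayama lemma, any homogeneous lifts of a basis of this quotient generate $\aaa_{\FF_p}$ as a $T_{\FF_p}$-module. This gives a surjection $\phi:T_{\FF_p}^{N}\twoheadrightarrow\aaa_{\FF_p}$ with $N\le 1825$. In particular $\aaa_{\FF_p}$ is finite over $T_{\FF_p}$. Since $\dim\aaa_{\FF_p}=16$ (by \cite{Levy02}), the map $T_{\FF_p}\to\aaa_{\FF_p}$ is injective: otherwise $\dim\aaa_{\FF_p}<16$. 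So the $\ol{s_i}$ form a system of parameters.

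\textbf{Step 2 (lower bound on the generic rank).} It suffices to show that $\dim_{\mathrm{Frac}(T_{\FF_p})}(\aaa_{\FF_p}\otimes_{T_{\FF_p}}\mathrm{Frac}(T_{\FF_p}))\ge 1825$. Once this holds, $\phi\otimes\mathrm{Frac}$ is a surjection from a space of dimension $N\le1825$ onto a space of dimension at least $1825$. Hence $N=1825$ and $\phi\otimes\mathrm{Frac}$ is an isomorphism, so $\ker\phi$ is torsion inside the torsion-free module $T_{\FF_p}^N$. Therefore $\ker\phi=0$ and $\phi$ is an isomorphism, exactly as in the proof of Theorem~\ref{main:thm}.

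To get the lower bound I would use upper semicontinuity of fiber length. Consider the finite morphism $\pi:\Spec\aaa_{\ZZ_{(p)}}\to\Spec\ZZ_{(p)}[s_1,\dots,s_{16}]$, which is finite because Step 1's Nakayama argument works over $\ZZ_{(p)}$ after lifting generators. The generic rank over $\QQ(s)$ is $1825$ by Theorem~\ref{main:thm}. The function $y\mapsto\dim_{\kappa(y)}\pi_*\mathcal O\otimes\kappa(y)$ is upper semicontinuous. However, semicontinuity runs in the wrong direction: it bounds special fibers from below by the generic one, not the generic point of the characteristic $p$ fiber from below.

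So instead I would argue directly with the geometry of $\cc_{\ol{\FF_p}}$. This scheme is irreducible and generically reduced for $p\ge5$ (\cite{Levy02} and the tangent-space argument of Section~\ref{sec:back}). Hence the generic rank equals the degree of the finite map $\cc_{\ol{\FF_p}}\to\mathbb A^{16}$, counted as field degree times generic multiplicity. I would compute this degree as the degree of $\cc_{\ol{\FF_p}}\subset\mathbb A^{28}$, using the fact that the $\ol{s_i}$ are linear forms cutting out only the origin (Step 1), so the projection is a finite linear projection. The degree of the irreducible cone $\cc$ is preserved under reduction mod $p$ when the ideal's flat limit is compared: $\aaa_{\ZZ_{(p)}}$ modulo $p$-torsion is flat over $\ZZ_{(p)}$, so its special fiber has Hilbert polynomial with leading coefficient $1825/15!$. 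That special fiber is a closed subscheme of $\cc_{\FF_p}$ of dimension $16$, and since $\cc_{\FF_p}$ is irreducible of dimension $16$ and generically reduced, the two agree generically. This gives $\deg\cc_{\FF_p}\ge1825$, which equals the generic rank.

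\textbf{Main obstacle.} The hardest step is making rigorous the comparison between the generic rank in characteristic $p$ and the degree $1825$ of the flat limit of $\cc_\QQ$. In particular, one must check that $p$-torsion in $\aaa_{\ZZ_{(p)}}$ can only add embedded or lower-dimensional components, so that it does not affect the top-dimensional degree. As a fallback, I would instead redo Propositions~\ref{prop:m2-second} and~\ref{prop:Hensel} in characteristic $p$. This means lifting $1825$ simple points of a slice $\aaa_{\FF_p}(c)$ via Hensel's lemma over $\ol{\FF_p}[[v_1,\dots,v_{16}]]$ and concluding by the Chinese Remainder Theorem argument of Theorem~\ref{main:thm}.
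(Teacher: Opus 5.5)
Your proof is correct, but after Step~1 it takes a different route from the paper.

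\textbf{The paper's route.} The paper stays entirely within the lifting argument. It lifts the $N\le 1825$ generators to $\aaa_{\ZZ_p}$ and obtains a surjection $\phi:\ZZ_p[s_1,\dots,s_{16}]^N\twoheadrightarrow\aaa_{\ZZ_p}$ by Nakayama over $\ZZ_p$. Inverting $p$ and comparing with the free rank-$1825$ module $\aaa_{\QQ_p}$ forces $N=1825$ and $\ker\phi[1/p]=0$. Since the source is $p$-torsion-free, $\ker\phi=0$. So $\aaa_{\ZZ_p}$ is free, and reducing mod $p$ gives freeness of $\aaa_{\FF_p}$.

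\textbf{Your route.} You bound the generic rank of $\aaa_{\FF_p}$ from below inside characteristic $p$. You compare $\deg\cc_{\FF_p}$ with the special fibre of the flat closure $B=\aaa_{\ZZ_{(p)}}/(p\text{-torsion})$, and invoke Levy's irreducibility and dimension results plus generic reducedness. This is valid, and it yields the extra fact $\deg\cc_{\FF_p}=1825$.

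\textbf{What each buys.} Your route uses more than necessary. $\aaa_{\FF_p}\twoheadrightarrow B/pB$ is a surjection of finitely generated graded $T_{\FF_p}$-modules, and $B/pB$ has Hilbert series $H(t)(1-t)^{-16}$. Hence $\mathrm{rank}_{T_{\FF_p}}\aaa_{\FF_p}\ge\mathrm{rank}_{T_{\FF_p}}(B/pB)=H(1)=1825$ directly. No geometric input is needed, and no control of the $p$-torsion, so your ``main obstacle'' is not really an obstacle. The paper's route gives the stronger conclusion that $\aaa_{\ZZ_p}$ itself is free, in particular $p$-torsion-free.

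\textbf{Semicontinuity.} Your dismissal of semicontinuity is mistaken. The prime $(p)\subset\ZZ_{(p)}[s_1,\dots,s_{16}]$ has residue field $\FF_p(s_1,\dots,s_{16})$ and is a specialization of the generic point. Upper semicontinuity therefore gives exactly $\dim\aaa_{\FF_p}\otimes_{T_{\FF_p}}\FF_p(s_1,\dots,s_{16})\ge 1825$. Equivalently, over the DVR $\ZZ_{(p)}[s]_{(p)}$ a module of generic rank $1825$ needs at least $1825$ generators. That one-line argument is essentially the paper's proof.

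\textbf{The fallback.} Your Hensel fallback would not prove the lemma as stated. It needs a new characteristic-$p$ computation, namely a squarefree degree-$1825$ parametrization at that prime, which is not among the hypotheses.
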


\begin{proof}
As in the characteristic zero case, we only need to show that $\aaa_K$ is a free $K[s_1,s_2,\dots,s_{16}]$-module. Note that since $\aaa_K=\aaa_{\FF_p}\otimes_{\FF_p} K$, it suffices to prove the required claim for $K=\FF_p$ which we do below.

Fix a set $\bb_{\FF_p}$ of size at most $1825$ such that $\aaa_{\FF_p}$ is generated by $\bb_{\FF_p}$ as a $\FF_p[s_1,s_2,\dots, s_{16}]$-module. Lift the elements of $\bb_{\FF_p}$ to $\aaa_{\ZZ_p}$, where $\ZZ_p$ is the ring of $p$-adic integers. Using a similar argument as the one in the proof of Corollary~\ref{cor:upper} we get that $\aaa_{\ZZ_p}$ is generated as a $\ZZ_p[s_1,s_2,\dots, s_{16}]$-module by at most $1825$ elements. We can invert $p$ to get that the $\QQ_p[s_1,s_2,\dots, s_{16}]$-module $ \aaa_{\QQ_p}$ is free of rank $1825$ by Theorem~\ref{main:thm}. Therefore, the set $\bb_{\FF_p}$ must have size exactly $1825$. Furthermore, since $\ZZ_p[s_1,s_2,\dots, s_{16}]$ is torsion-free, we get that $\aaa_{\ZZ_p}$ is a free $\ZZ_p[s_1,s_2,\dots, s_{16}]$-module of rank $1825$. Going modulo $p$, we get that $\aaa_{\FF_p}$ is a free $\FF_p[s_1,s_2,\dots, s_{16}]$-module of rank $1825$.
\end{proof}

Note that, since \[\dim_{\QQ}(\aaa/({s_1}, {s_2},\dots,{s_{16}}))= 1825,\] we can clear denominators and go modulo $p$ for all but finitely many primes. This fact along with the above lemma implies that:
\begin{corollary}
The ring $\aaa_K$ is Cohen-Macaulay for all but finitely many $p$. Furthermore, $\cc_K$ is reduced whenever it is Cohen-Macaulay.
\end{corollary}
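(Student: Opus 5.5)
The corollary has two claims, and I will prove them separately. For the first claim (Cohen–Macaulayness for all but finitely many $p$), I will apply Lemma~\ref{lem:p-crit}, so it suffices to show
\[\dim_{\FF_p}(\aaa_{\FF_p}/(\ol{s_1},\dots,\ol{s_{16}}))\leq 1825\]
for all but finitely many primes $p$. I will get this by spreading out the characteristic zero statement $\dim_{\QQ}(\aaa/(s_1,\dots,s_{16}))=1825$, which follows from Theorem~\ref{main:thm} and Corollary~\ref{cor:maincor}.

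Concretely, fix a finite set $\bb$ of $1825$ monomials in the $u_j$ whose images form a $\QQ$-basis of $\aaa(0)=\QQ[u_j]/(P_k(0,u_j))$. The ideal $(P_k(0,u_j))$ is homogeneous and the quotient is finite dimensional, concentrated in degrees $\leq 10$. Hence every monomial of degree $11$ in the $u_j$ lies in the ideal. Moreover, every monomial of degree $d\leq 10$ can be written as a $\QQ$-linear combination of elements of $\bb$ plus an explicit combination $\sum_k g_k P_k(0,u_j)$.

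There are only finitely many such monomials, namely those of degree $\leq 11$, and I fix one such expression for each. This involves finitely many rational coefficients. Let $N$ be the product of all their denominators, and take any prime $p\nmid N$ with $p\geq 5$. Then all these identities make sense in $\ZZ_{(p)}[u_j]$ and reduce modulo $p$ to identities in $\FF_p[u_j]$.

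In particular, every monomial of degree $11$ lies in $(\ol{P_k}(0,u_j))$, so every monomial of degree $\geq 11$ does as well. Every monomial of lower degree lies in the span of $\ol\bb$ modulo that ideal. Hence $\dim_{\FF_p}\aaa_{\FF_p}/(\ol{s_i})\leq |\bb|=1825$, and Lemma~\ref{lem:p-crit} gives Cohen–Macaulayness of $\aaa_K$. The only point needing care is finiteness: one must use the degree bound to reduce to finitely many identities, since otherwise the excluded set of primes could be infinite.

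For the second claim, suppose $\aaa_K$ is Cohen–Macaulay with $\operatorname{char} K=p\geq 5$. The scheme $\cc_K$ is generically reduced by Section~\ref{sec:back}. This uses irreducibility from \cite{Levy02} together with the smoothness of $\cc_{\ol K}$ at pairs $(x,y)$ with $x$ regular semisimple; reducedness over $\ol K$ descends to $K$. A Cohen–Macaulay ring has no embedded primes, so it satisfies $(S_1)$. Generic reducedness gives $(R_0)$ at the unique minimal prime. By Serre's criterion, $(R_0)+(S_1)$ implies the ring is reduced, so $\cc_K$ is reduced.

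The main obstacle is the bookkeeping in the first paragraph, namely that finitely many denominators suffice. The degree-$11$ vanishing argument handles this.
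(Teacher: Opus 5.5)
Your proof is correct and follows the paper's argument. For Cohen--Macaulayness, you spread out the characteristic-zero equality $\dim_{\QQ}\aaa/(s_1,\dots,s_{16})=1825$ by clearing finitely many denominators and then invoke Lemma~\ref{lem:p-crit}; your degree-$11$ bound makes the paper's one-line ``clear denominators'' precise. For reducedness, you combine Cohen--Macaulayness with generic reducedness as the paper does, the only cosmetic difference being that you descend generic reducedness to $K$ and apply $(R_0)+(S_1)$ there, whereas the paper proves reducedness over $\ol K$ and then uses $\aaa_K\hookrightarrow\aaa_{\ol K}$.
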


\begin{proof}
The argument for Cohen-Macaulayness is given above. This along with the fact that $\cc_K$ is generically reduced proves reducedness when $K$ is algebraically closed. For general $K$, we note that any nilpotent in $\aaa_K$ naturally also embeds into $\aaa_{\ol{K}}$ as a nilpotent, and thus, we conclude that $\cc_K$ is reduced.
\end{proof}

The criterion from the above lemma can be checked individually at each prime $p$, but we do not currently have a uniform way of dealing with all primes at once. We have run the Macaulay2 code in Section~\ref{sec:app2} separately for each $p$ to verify Cohen-Macaulayness (and thus reducedness) for all $5\leq p\leq 1000$.


\section{Appendix 1: The structure constants of $\Gg_2$} \label{sec:app1}

Recall the following basis that was fixed for the Lie algebra $\Gg_2$:
\[(b_i)_{1\leq i\leq 14}:=(h_{\alpha}, h_{\beta}, e_\alpha,e_\beta,e_{\alpha+\beta},e_{2\alpha+\beta},e_{3\alpha+\beta},e_{3\alpha+2\beta},f_\alpha,f_\beta,f_{\alpha+\beta},f_{2\alpha+\beta},
 f_{3\alpha+\beta},f_{3\alpha+2\beta}).\]
 With respect to this basis, we can write down the commutator matrix (displayed on the next page). We use the convention that the $(i,j)^{th}$ entry is the commutator $[b_i,b_j]$.

 \begin{landscape}
 \thispagestyle{empty}
\begin{table}[ht] 
\centering
\setlength{\arraycolsep}{3pt}   
\resizebox{\linewidth}{!}{$
\begin{array}{c|cc|cccccc|cccccc}
 & h_{\alpha} & h_{\beta} & e_{\alpha} & e_{\beta} & e_{\alpha+\beta} & e_{2\alpha+\beta} & e_{3\alpha+\beta} & e_{3\alpha+2\beta} & f_{\alpha} & f_{\beta} & f_{\alpha+\beta} & f_{2\alpha+\beta} & f_{3\alpha+\beta} & f_{3\alpha+2\beta} \\ \hline
h_{\alpha} & 0 & 0 & 2e_{\alpha} & -3e_{\beta} & -e_{\alpha+\beta} & e_{2\alpha+\beta} & 3e_{3\alpha+\beta} & 0 & -2f_{\alpha} & 3f_{\beta} & f_{\alpha+\beta} & -f_{2\alpha+\beta} & -3f_{3\alpha+\beta} & 0 \\
h_{\beta} & 0 & 0 & -e_{\alpha} & 2e_{\beta} & e_{\alpha+\beta} & 0 & -e_{3\alpha+\beta} & e_{3\alpha+2\beta} & f_{\alpha} & -2f_{\beta} & -f_{\alpha+\beta} & 0 & f_{3\alpha+\beta} & -f_{3\alpha+2\beta} \\ \hline
e_{\alpha} & -2e_{\alpha} & e_{\alpha} & 0 & e_{\alpha+\beta} & 2e_{2\alpha+\beta} & 3e_{3\alpha+\beta} & 0 & 0 & h_{\alpha} & 0 & 3f_{\beta} & 2f_{\alpha+\beta} & f_{2\alpha+\beta} & 0 \\
e_{\beta} & 3e_{\beta} & -2e_{\beta} & -e_{\alpha+\beta} & 0 & 0 & 0 & e_{3\alpha+2\beta} & 0 & 0 & h_{\beta} & -f_{\alpha} & 0 & 0 & f_{3\alpha+\beta} \\
e_{\alpha+\beta} & e_{\alpha+\beta} & -e_{\alpha+\beta} & -2e_{2\alpha+\beta} & 0 & 0 & -3e_{3\alpha+2\beta} & 0 & 0 & -3e_{\beta} & e_{\alpha} & -h_{\alpha} -3h_{\beta}& 2f_{\alpha} & 0 & f_{2\alpha+\beta} \\
e_{2\alpha+\beta} & -e_{2\alpha+\beta} & 0 & -3e_{3\alpha+\beta} & 0 & 3e_{3\alpha+2\beta} & 0 & 0 & 0 & -2e_{\alpha+\beta} & 0 & -2e_{\alpha} & 2h_{\alpha}+3h_{\beta} & -f_{\alpha} & f_{\alpha+\beta} \\
e_{3\alpha+\beta} & -3e_{3\alpha+\beta} & e_{3\alpha+\beta} & 0 & -e_{3\alpha+2\beta} & 0 & 0 & 0 & 0 & -e_{2\alpha+\beta} & 0 & 0 & e_{\alpha} & -h_{\alpha}-h_{\beta} & f_{\beta} \\
e_{3\alpha+2\beta} & 0 & -e_{3\alpha+2\beta} & 0 & 0 & 0 & 0 & 0 & 0 & 0 & -e_{3\alpha+\beta} & -e_{2\alpha+\beta} & -e_{\alpha+\beta} & -e_{\beta} & h_{\alpha}+2h_{\beta} \\ \hline
f_{\alpha} & 2f_{\alpha} & -f_{\alpha} & -h_{\alpha} & 0 & 3e_{\beta} & 2e_{\alpha+\beta} & e_{2\alpha+\beta} & 0 & 0 & f_{\alpha+\beta} & 2f_{2\alpha+\beta} & 3f_{3\alpha+\beta} & 0 & 0 \\
f_{\beta} & -3f_{\beta} & 2f_{\beta} & 0 & -h_{\beta} & -e_{\alpha} & 0 & 0 & e_{3\alpha+\beta} & -f_{\alpha+\beta} & 0 & 0 & 0 & f_{3\alpha+2\beta} & 0 \\
f_{\alpha+\beta} & -f_{\alpha+\beta} & f_{\alpha+\beta} & -3f_{\beta} & f_{\alpha} & h_{\alpha}+3h_{\beta} & 2e_{\alpha} & 0 & e_{2\alpha+\beta} & -2f_{2\alpha+\beta} & 0 & 0 & -3f_{3\alpha+2\beta} & 0 & 0 \\
f_{2\alpha+\beta} & f_{2\alpha+\beta} & 0 & -2f_{\alpha+\beta} & 0 & -2f_{\alpha} & -2h_{\alpha}-3h_{\beta} & -e_{\alpha} & e_{\alpha+\beta} & -3f_{3\alpha+\beta} & 0 & 3f_{3\alpha+2\beta} & 0 & 0 & 0 \\
f_{3\alpha+\beta} & 3f_{3\alpha+\beta} & -f_{3\alpha+\beta} & -f_{2\alpha+\beta} & 0 & 0 & f_{\alpha} & h_{\alpha}+h_{\beta} & e_{\beta} & 0 & -f_{3\alpha+2\beta} & 0 & 0 & 0 & 0 \\
f_{3\alpha+2\beta} & 0 & f_{3\alpha+2\beta} & 0 & -f_{3\alpha+\beta} & -f_{2\alpha+\beta} & -f_{\alpha+\beta} & -f_{\beta} & -h_{\alpha}-2h_{\beta} & 0 & 0 & 0 & 0 & 0 & 0
\end{array}
$}
\caption{Commutator table of $\mathfrak{g}_2$.}
\end{table}
\end{landscape}
 
 Using this table, we can compute the equations defining the commuting ideal: Let $x=\sum_{i=1}^{14}x_ib_i$ and $y=\sum_{i=1}^{14}y_ib_i$. Then, $[x,y]=\sum_{k=1}^{14}Q_k(x,y) b_k$, where:
 \begin{align*}
Q_1(x,y)&=w_{3,9}-w_{5,11}+2w_{6,12}-w_{7,13}+w_{8,14},\\
Q_2(x,y)&=w_{4,10}-3w_{5,11}+3w_{6,12}-w_{7,13}+2w_{8,14},\\
Q_3(x,y)&=2w_{1,3}-w_{2,3}+w_{5,10}-2w_{6,11}+w_{7,12},\\
Q_4(x,y)&=-3w_{1,4}+2w_{2,4}-3w_{5,9}-w_{8,13},\\
Q_5(x,y)&=-w_{1,5}+w_{2,5}+w_{3,4}-2w_{6,9}-w_{8,12},\\
Q_6(x,y)&=w_{1,6}+2w_{3,5}-w_{7,9}-w_{8,11},\\
Q_7(x,y)&=3w_{1,7}-w_{2,7}+3w_{3,6}-w_{8,10},\\
Q_8(x,y)&=w_{2,8}+w_{4,7}-3w_{5,6},\\
Q_9(x,y)&=-2w_{1,9}+w_{2,9}-w_{4,11}+2w_{5,12}-w_{6,13},\\
Q_{10}(x,y)&=3w_{1,10}-2w_{2,10}+3w_{3,11}+w_{7,14},\\
Q_{11}(x,y)&=w_{1,11}-w_{2,11}+2w_{3,12}+w_{6,14}+w_{9,10},\\
Q_{12}(x,y)&=-w_{1,12}+w_{3,13}+w_{5,14}+2w_{9,11},\\
Q_{13}(x,y)&=-3w_{1,13}+w_{2,13}+w_{4,14}+3w_{9,12},\\
Q_{14}(x,y)&=-w_{2,14}+w_{10,13}-3w_{11,12}.
 \end{align*}
(Here, we have used the notation $w_{i,j}:=x_iy_j-x_jy_i$.)
\begin{remark} \label{rem:perm}
The permutation $\sigma$ defined at the beginning of Section~\ref{sec:sop} was chosen so that for each $1\leq i\leq 14$, the commutator $[b_i,b_{\sigma(i)}]\neq 0$. In fact, each $[b_i,b_{\sigma(i)}]$ is a non-zero scalar multiple of a distinct $b_j$. Therefore, the permutation $\sigma$ could be thought of as a Lie theoretic analog of a complete mapping (see, for example, \cite{Hall}).

This permutation was found by playing around with the above commutator matrix, attempting to select $14$ entries so that there is one in each row and each column. The above property of the permutation ensures that when we substitute $x_i=y_{\sigma(i)}$ in the $14$ quadratic equations $Q_k(x,y)$ defining the scheme $\cc$, each of them ends up with exactly one square term. For example, we observe that:
\[(Q_1)_{y_i\mapsto x_{\sigma^{-1}(i)}}=x_3x_{12}-x_9^2 - x_5x_8+x_{11}x_{14} + 2x_6x_7-2x_3x_{12}-x_6x_7 +x_4x_{13}+x_2x_8-x_{13}x_{14},\]
etc. Heuristically, we expect this to be one of the reasons why the above permutation led to a faster Gr\"obner basis computation compared to other options we tried.
\end{remark}

Using the commutator matrix, we can also compute the element $\Delta$ in Corollary~\ref{cor:genci} in our coordinates. In the notation there:
\begin{align*}
\Delta&=\lambda_1\mu_2-\lambda_2\mu_1\\
&=\sum\limits_{k,l=1}^{14}\kappa(b_k,b_1)\kappa(b_l,b_2)(x_ky_l-x_ly_k)\\
&=\sum\limits_{k,l=1}^{2}\kappa(b_k,b_1)\kappa(b_l,b_2)(x_ky_l-x_ly_k)\\
&=(\kappa(b_1,b_1)\kappa(b_2,b_2)-\kappa(b_1,b_2)^2)(x_1y_2-x_2y_1).
\end{align*}
Here, we have used the fact that $\kappa(b_k,b_l)=0$ whenever $k\leq 2$ and $l\geq 3$. This equality holds because $\ad(b_k)$ necessarily preserves each root space (since $b_k$ lies in the Cartan subalgebra) and $\ad(b_l)$ necessarily moves each root space.

Then, using the commutator table, we can compute that:
\[\kappa(b_1,b_1)=\Tr(\ad(b_1)^2)=48, \qquad\kappa(b_2,b_2)=\Tr(\ad(b_2)^2)=16, \qquad \kappa(b_1,b_2)=\Tr(\ad(b_1)\ad(b_2))=-24,\]
and so $\Delta=192(x_1y_2-x_2y_1)$. In terms of the generators $s_i,u_j$ defined in Section~\ref{sec:sop}, we can write $\Delta=192(s_{15}(s_{16}-s_5)-u_1(u_9-s_{11}))$.

\section{Appendix 2: Macaulay2 codes} \label{sec:app2}

\subsection{Computing system of parameters and Hilbert series} \label{sec:mac1}

The following code computes the $14$ quadratic equations defining the commuting scheme and verifies that the $s_i$'s defined in Section~\ref{sec:sop} give a system of parameters over the field $\FF_p$ for a given prime $p$. Note that we take $p=32003$ by default.

We note that replacing the following code with one where we only work with the $x_i$'s and replace all the occurrences of $y_i$'s by $x_{\sigma^{-1}(i)}$ reduces the run time slightly.

\begin{lstlisting}
p=32003;
F=ZZ/p;

c_(1)=10, c_(2)=14, c_(3)=6, c_(4)=7, c_(5)=2, c_(6)=13, c_(7)=12;
c_(8)=11, c_(9)=3, c_(10)=4, c_(11)=1, c_(12)=9, c_(13)=8, c_(14)=5;

R = F[a_(1) .. a_(14), x_(1) .. x_(14), y_(1) .. y_(14)];

A={
{0,0,2*a_(3),-3*a_(4),-a_(5),a_(6),3*a_(7),0,-2*a_(9),3*a_(10),a_(11),-a_(12),-3*a_(13),0}, 
{0,0,-a_(3),2*a_(4),a_(5),0,-a_(7),a_(8),a_(9),-2*a_(10),-a_(11),0,a_(13),-a_(14)}, 
{-2*a_(3),a_(3),0,a_(5),2*a_(6),3*a_(7),0,0,a_(1),0,3*a_(10),2*a_(11),a_(12),0}, 
{3*a_(4),-2*a_(4),-a_(5),0,0,0,a_(8),0,0,a_(2),-a_(9),0,0,a_(13)},
{a_(5),-a_(5),-2*a_(6),0,0,-3*a_(8),0,0,-3*a_(4),a_(3),-3*a_(2)-a_(1),2*a_(9),0,a_(12)},
{-a_(6),0,-3*a_(7),0,3*a_(8),0,0,0,-2*a_(5),0,-2*a_(3),2*a_(1)+3*a_(2),-a_(9),a_(11)},
{-3*a_(7),a_(7),0,-a_(8),0,0,0,0,-a_(6),0,0,a_(3),-a_(1)-a_(2),a_(10)},
{0,-a_(8),0,0,0,0,0,0,0,-a_(7),-a_(6),-a_(5),-a_(4),a_(1)+2*a_(2)},
{2*a_(9),-a_(9),-a_(1),0,3*a_(4),2*a_(5),a_(6),0,0,a_(11),2*a_(12),3*a_(13),0,0},
{-3*a_(10),2*a_(10),0,-a_(2),-a_(3),0,0,a_(7),-a_(11),0,0,0,a_(14),0},
{-a_(11),a_(11),-3*a_(10),a_(9),3*a_(2)+a_(1),2*a_(3),0,a_(6),-2*a_(12),0,0,-3*a_(14),0,0},
{a_(12),0,-2*a_(11),0,-2*a_(9),-2*a_(1)-3*a_(2),-a_(3),a_(5),-3*a_(13),0,3*a_(14),0,0,0},
{3*a_(13),-a_(13),-a_(12),0,0,a_(9),a_(1)+a_(2),a_(4),0,-a_(14),0,0,0,0},
{0,a_(14),0,-a_(13),-a_(12),-a_(11),-a_(10),-a_(1)-2*a_(2),0,0,0,0,0,0}
};

M = matrix A;
print(M);

S=0;

for j from 1 to 14 do
for k from 1 to 14 do
{
	S = S + x_(j)*M_(j-1,k-1)*y_k;
}

I=ideal();
for j from 1 to 14 do
{
	I=I+ideal(a_(j));
}

Q=0;

for j from 1 to 14 do
{
	Q=diff(a_(j),S);
	print(Q);
	I=I+ideal(Q);
}

for j from 1 to 14 do
{
	I=I+(x_(j)-y_(c_(j)));
}

I=I+ideal(x_(1));
I=I+ideal(x_(5));

G=gb(I);

time s = hilbertSeries (I);
s = reduceHilbert s;

print(s);

time dimensionI=dim(I);
print("dimension");
print dimensionI;
\end{lstlisting}

\subsection{Univariate parametrization of affine slice} \label{sec:mac2}

The following code takes as input the system of parameters $(s_i)$ verified by the previous code as well as the affine slice $c$ defined in Section~\ref{sec:lower} and computes a parametrization of the points on that affine slice in an algebraic closure of $\FF_{32003}$.

\begin{lstlisting}
flintPython = if fileExists ".venv-g2/bin/python" then ".venv-g2/bin/python" else "python3";

p=32003;
kk = ZZ/p;
sigma = {10,14,6,7,2,13,12,11,3,4,1,9,8,5};
c = {13641,22825,27432,6173,21845,18827,20908,8404,5290,9258,14370,10545,3220,26424,1,1};

R = kk[u_1..u_12, MonomialOrder=>GRevLex];
xx = {sub(c#14,R),u_1,u_2,u_3,sub(c#15,R),u_4,u_5,u_6,
      u_7,u_8,u_9,u_10,u_11,u_12};
inverseSigma = apply(toList(1..14),k -> position(sigma,i -> i==k));
yy = apply(inverseSigma,i -> xx#i-sub(c#i,R));

w = (i,j) -> xx#(i-1)*yy#(j-1)-xx#(j-1)*yy#(i-1);
q = {
    w(3,9)-w(5,11)+2*w(6,12)-w(7,13)+w(8,14),
    w(4,10)-3*w(5,11)+3*w(6,12)-w(7,13)+2*w(8,14),
    2*w(1,3)-w(2,3)+w(5,10)-2*w(6,11)+w(7,12),
    -3*w(1,4)+2*w(2,4)-3*w(5,9)-w(8,13),
    -w(1,5)+w(2,5)+w(3,4)-2*w(6,9)-w(8,12),
    w(1,6)+2*w(3,5)-w(7,9)-w(8,11),
    3*w(1,7)-w(2,7)+3*w(3,6)-w(8,10),
    w(2,8)+w(4,7)-3*w(5,6),
    -2*w(1,9)+w(2,9)-w(4,11)+2*w(5,12)-w(6,13),
    3*w(1,10)-2*w(2,10)+3*w(3,11)+w(7,14),
    w(1,11)-w(2,11)+2*w(3,12)+w(6,14)+w(9,10),
    -w(1,12)+w(3,13)+w(5,14)+2*w(9,11),
    -3*w(1,13)+w(2,13)+w(4,14)+3*w(9,12),
    -w(2,14)+w(10,13)-3*w(11,12)
};

inputFile = temporaryFileName();
outputFile = temporaryFileName();
inputNames = apply(flatten entries vars R,toString);
f = openOut inputFile;
f << demark(",",inputNames) << endl << p << endl;
f << demark(",\n",apply(q,a -> replace("[)(]","",toExternalString a))) << endl;
close f;
shellQuote = s -> "'" | replace("'", "'\\''", s) | "'";
msStatus = run("msolve -P 2 -l 2 -t 4 -v 0 -f " |
             shellQuote(inputFile) | " -o " | shellQuote(outputFile));

msText = concatenate select(lines get outputFile,line -> not match("#",line));
msText = first separate(":",msText);
msText = replace("'","\"",msText);
msText = replace("\\[","{",msText);
msText = replace("\\]","}",msText);
msAnswer = value msText;
msData = msAnswer#1;
msParts = (msData#5)#1;
Z = kk[z];
toPolynomial = a -> sum apply(toList(0..a#0),i -> sub((a#1)#i,Z)*z^i);
h = toPolynomial(msParts#0);
h = h // leadCoefficient h;
den = toPolynomial(msParts#1);
inverseDen = (gcdCoefficients(den,h))#1;
msCoordinates = apply(msParts#2,a -> (-toPolynomial(a#0)*inverseDen) % h) | {z};
rho = apply(inputNames,name -> msCoordinates#(position(msData#3,v -> v==name)));
removeFile inputFile;
removeFile outputFile;

assert(first degree h == 1825);
assert(gcd(h,diff(z,h)) == 1_Z);
assert(#rho == 12);

coordinateMatrix = sub(last coefficients(matrix {rho | {z % h}}),kk);
separatorCoefficients = solve(coordinateMatrix_{0..(#rho-1)},coordinateMatrix_{#rho});
ell = sum(#rho,i -> sub(separatorCoefficients_(i,0),R)*R_i);
parametrization = map(Z,R,rho);
assert((parametrization(ell)-z) % h == 0_Z);
print("Separating linear form found: " | toString ell);

delta = 192*(xx#0*yy#1-xx#1*yy#0);
deltaAtRho = parametrization(delta) % h;
assert(gcd(h,deltaAtRho) == 1_Z);
print "PASS: gcd(h,delta(rho))=1.";

J = transpose jacobian matrix {drop(q,2)};
JAtRho = parametrization(J);

jacInputFile = temporaryFileName();
jacOutputFile = temporaryFileName();
jacHelperFile = temporaryFileName();
jacStream = openOut jacInputFile;
jacStream << char kk << " " << numRows JAtRho << endl;
writePolynomial = (stream,a) -> stream << demark(" ",flatten apply(listForm a,
    term -> {toString((term#0)#0),toString(lift(term#1,ZZ))})) << endl;
writePolynomial(jacStream,h);
scan(flatten entries JAtRho,a -> writePolynomial(jacStream,a));
close jacStream;
jacStream = openOut jacHelperFile;
jacStream << ///
"""Exact determinant modulo squarefree h; polynomial arithmetic is in FLINT."""
import sys
import time
from flint import nmod_poly


def determinant_mod(matrix, h):
    p, n = h.modulus(), len(matrix)
    zero, one = nmod_poly([], p), nmod_poly([1], p)
    a = [[entry % h for entry in row] for row in matrix]
    d = one
    for i in range(n):
        pivot_row = next((j for j in range(i, n) if a[j][i].gcd(h) == one), None)
        if pivot_row is None:
            # An invertible matrix over a product of fields need not have
            # a unit entry in this column. Split h and use exact CRT.
            j = next((j for j in range(i, n) if a[j][i] != zero), None)
            if j is None:
                return zero
            g = a[j][i].gcd(h)
            k = h // g
            gcd_gk, inverse_g, _ = g.xgcd(k)
            if gcd_gk != one:
                raise ValueError("h must be squarefree")
            dg = determinant_mod(matrix, g)
            dk = determinant_mod(matrix, k)
            return (dg + g * (((dk - dg) * inverse_g) % k)) % h
        if pivot_row != i:
            a[i], a[pivot_row] = a[pivot_row], a[i]
            d = -d
        gcd_ph, inverse_pivot, _ = a[i][i].xgcd(h)
        if gcd_ph != one:
            raise ArithmeticError("nonunit pivot")
        d = (d * a[i][i]) % h
        for j in range(i + 1, n):
            multiplier = (a[j][i] * inverse_pivot) % h
            for k in range(i + 1, n):
                a[j][k] = (a[j][k] - multiplier * a[i][k]) % h
            a[j][i] = zero
    return d


def main():
    started = time.monotonic()
    with open(sys.argv[1]) as source:
        p, n = map(int, source.readline().split())

        def read_poly():
            line = source.readline()
            if line == "":
                raise ValueError("truncated polynomial data")
            terms = list(map(int, line.split()))
            if len(terms) % 2:
                raise ValueError("expected exponent/coefficient pairs")
            coefficients = [0] * (max(terms[::2], default=-1) + 1)
            for exponent, coefficient in zip(terms[::2], terms[1::2]):
                coefficients[exponent] = coefficient
            return nmod_poly(coefficients, p)

        h = read_poly()
        matrix = [[read_poly() for _ in range(n)] for _ in range(n)]
        if source.read().strip():
            raise ValueError("unexpected trailing data")
    if h.degree() < 1 or h.gcd(h.derivative()) != 1:
        raise ValueError("expected a nonconstant squarefree h")
    determinant = determinant_mod(matrix, h)
    coefficients = list(determinant.coeffs()) or [0]
    with open(sys.argv[2], "w") as destination:
        destination.write("{" + ",".join(str(int(c)) for c in coefficients) + "}\n")
    print(f"FLINT determinant modulo h: {time.monotonic()-started:.3f} seconds", flush=True)


if __name__ == "__main__":
    main()
///;
close jacStream;
jacStatus = run(shellQuote(flintPython) | " " | shellQuote(jacHelperFile) | " " |
    shellQuote(jacInputFile) | " " | shellQuote(jacOutputFile));
if jacStatus != 0 then error "FLINT helper failed: check flintPython and install python-flint in that environment";
jacDetCoefficients = value get jacOutputFile;
jacobianDetAtRho = sum(#jacDetCoefficients,i -> sub(jacDetCoefficients#i,Z)*Z_0^i);
removeFile jacInputFile;
removeFile jacOutputFile;
removeFile jacHelperFile;
assert(gcd(h,jacobianDetAtRho) == 1_Z);
print "PASS: gcd(h,det(J(rho)))=1.";

print "SUCCESS: degree h=1825; h squarefree; separator verified; delta and J invertible modulo h.";
\end{lstlisting}

\printbibliography

\end{document}